\newtheorem{theorem}{Theorem}
\newtheorem{lemma}{Lemma}
\newtheorem{corollary}{Corollary}
\def\russianL{\text{\cyr L}}
\DeclareSymbolFont{mcyr}{OT1}{wncyr}{m}{n}
\DeclareMathSymbol\Lob{\mathop}{mcyr}{76}
\def\Rho{{\rm P}}
\begin{document}

\title{On volumes of hyperbolic Coxeter polytopes and quadratic forms}

\author{John G. Ratcliffe and Steven T. Tschantz}

\address{Department of Mathematics, Vanderbilt University, Nashville, TN 37240
\vspace{.1in}}

\email{j.g.ratcliffe@vanderbilt.edu}

\date{}

\begin{abstract}
In this paper, we compute the covolume of the group of units of the 
quadratic form $f_d^n(x) = x_1^2+x_2^2+\cdots +x_n^2-d\hspace{.01in}x_{n+1}^2$ with $d$ an odd, positive, square-free integer. 
Mcleod has determined the hyperbolic Coxeter fundamental domain of the reflection subgroup of the group 
of units of the quadratic form $f_3^n$.
We apply our covolume formula to compute the volumes of these hyperbolic Coxeter polytopes. 
\end{abstract}

\maketitle

\section{Introduction} 
In 1996, we computed the covolume of the group of units of the quadratic form 
$$f_d^n(x) = x_1^2+x_2^2+\cdots +x_n^2-d\hspace{.01in}x_{n+1}^2$$
with $d$ an odd, positive, square-free integer,  
in order to determine the volume of some hyperbolic Coxeter simplices, see \cite{RT97} and \S 5 of \cite{J-T}.  
We never got around to writing up our computation except for the case $d = 1$ in \cite{RT97}. 
Recently, several papers have been written concerning the group of units of the quadratic form $f_3^n$, 
and so we felt it was time to write up our more general computation. 
Our main result is Theorem 4 which gives an explicit formula for the covolume 
of the group of units of $f_d^n$. 

J. Mcleod \cite{M} has determined the reflection subgroup of the group of units of $f_3^n$ 
and has shown that it has finite index if and only if $n \leq 13$. 
As an application of our computation, we determine the volume of the hyperbolic Coxeter fundamental domain of the reflection subgroup of the group of units of the quadratic form $f_3^n$, for $n \leq 13$, described in \cite{M}. 
See Table 1 for the volumes of Mcleod's polytopes. 

M. Belolipetsky and V. Emery \cite{B-E} proved that for each odd dimension $n \geq 5$, there is a 
unique, orientable, noncompact, arithmetic, hyperbolic $n$-orbifold of smallest volume and determined its volume. 
V. Emery mentioned in \cite{E} that for $n \equiv 3\ \mathrm{mod} \ 4$, with $n \geq 7$,   
the corresponding arithmetic group is commensurable to the group of units of $f_3^n$.  
We determine the ratio between these two covolumes.

\section{Siegel's Covolume Formula for Unit Groups}  

Let $f$ be a quadratic form in $n+1$ real variables, with $n\geq 2$,  
that is equivalent over ${\mathbb R}$ to the {\it Lorentzian quadratic form}
$$x_1^2+\cdots +x_n^2-x_{n+1}^2.$$
Let $S$ be the matrix of the quadratic form $f$. 
We assume that all the entries of $S$ are integers and $d = |\mathrm{det}\, S|$ is an odd, square-free, positive integer. 

The {\it group of units} of the form $f$ is the group of all
$(n+1)\times (n+1)$ matrices $A$, with integral entries, such that
$A^tSA=S$. A unit of $f$ is said to be {\it positive} or {\it negative}
according as $A$ leaves invariant each of the two connected components of
$\{x\in {\mathbb R}^{n+1}: x^tSx<0\}$ or interchanges them.
The group of positive units of $f$ corresponds to a discrete group
$\Gamma$ of isometries of hyperbolic $n$-space
$$H^n\,=\,\{x\in {E}^{n+1}: x_1^2+\cdots +x_n^2-x_{n+1}^2=-1
                \ \ {\rm and}\ x_{n+1}>0\}$$
under the equivalence of $f$ with the Lorentzian quadratic form. 
For a discussion, see \S1 of \cite{J-T2}. 

Let $q$ be a positive integer and let ${\mathbb Z}_{q}={\mathbb Z}/q{\mathbb Z}$. 
Let $E_q(S)$ be the number of $(n+1)\times (n+1)$ matrices $A$ over ${\mathbb Z}_q$ 
such that $A^t S A = S$. 
By Formula 82 in Siegel's paper \cite{S36}, we have that
\begin{equation}  
{\rm vol}(H^n/\Gamma)\, = \, 4d^{\frac{n+2}{2}}
\prod^n_{k=1}\pi^{-\frac{k}{2}}\Gamma({\textstyle{\frac{k}{2}}})\,\cdot\,
\lim_{q\to\infty}{2^{\omega(q)}\frac{q^{\frac{n(n+1)}{2}}}{E_q(S)}}
\end{equation}
where $\Gamma({\textstyle{\frac{k}{2}}})$ is the gamma function evaluated at $k/2$, 
and $\omega(q)$ is the number of distinct prime divisors of $q$, 
and $q$ goes to infinity via the sequence $2!, 3!, 4! \ldots \,.$
In this paper a centered dot will delimit a product from the next factor. 

We now evaluate the limit in Siegel's volume formula. 
Let $q = p_1^{a_1}p_2^{a_2}\cdots p_r^{a_r}$ be the prime factorization of $q$. 
Now $E_q(S)$ is a multiplicative function of $q$ by Lemma 18 of Siegel's paper \cite{S35}, and so we have 
\begin{equation}
\frac{2^{\omega(q)} q^{\frac{n(n+1)}{2}}}{E_q(S)} = \mathop{\prod}_{i=1}^r\frac{2p_i^{\frac{a_in(n+1)}{2}}}{E_{p_i^{a_i}}(S)}.
\end{equation}
Henceforth $p$ is a prime number.  By Lemma 18 of \cite{S35}, we have 
\begin{equation}
\frac{2p^{\frac{an(n+1)}{2}}}{E_{p^a}(S)} = \left\{ \begin{array}{ll}
\frac{2p^{\frac{3n(n+1)}{2}}}{E_{p^3}(S)} & \mbox{if $p\, |\, 2d$ and $a\geq 3$}, \vspace{.05in}\\ 
\frac{2p^{\frac{n(n+1)}{2}}}{E_p(S)} & \mbox{if $p\nmid 2d$ and $a\geq 1$}.
\end{array}
\right. 
\end{equation}
Therefore, we have
\begin{equation}
\lim_{q\to\infty}2^{\omega(q)}\frac{q^{\frac{n(n+1)}{ 2}}}{E_q(S)} = 
\prod_{p\, |\, 2d}\frac{2p^{\frac{3n(n+1)}{2}}}{E_{p^3}(S)}
\prod_{p\, \nmid\, 2d} \frac{2p^{\frac{n(n+1)}{2}}}{E_p(S)}.
\end{equation}

First assume that $n$ is even and $p\nmid 2d$.  
Then by Lemma 18 of \cite{S35}, we have that 
\begin{equation}
\frac{2p^{\frac{n(n+1)}{2}}}{E_p(S)} = \prod_{k=1}^\frac{n}{2}(1-p^{-2k})^{-1}.
\end{equation}
Hence we have
\begin{equation}
\prod_{p\, \nmid\, 2d} \frac{2p^{\frac{n(n+1)}{2}}}{E_p(S)} = \prod_{k=1}^\frac{n}{2}\prod_{p\, \nmid\, 2d}(1-p^{-2k})^{-1}.
\end{equation}
Using the product formula for the Riemann zeta function $\zeta(2k)$, we have 
\begin{equation}
\prod_{p\, \nmid\, 2d} \frac{2p^{\frac{n(n+1)}{2}}}{E_p(S)} = 
\prod_{k=1}^\frac{n}{2}\left(\prod_{p\, |\, 2d}(1-p^{-2k})\right)\zeta(2k).
\end{equation}

Now assume that $n$ is odd and $p\nmid 2d$. 
Then by Lemma 18 of \cite{S35}, we have that 
\begin{equation}
\frac{2p^{\frac{n(n+1)}{2}}}{E_p(S)} =
 \left(1-\left( \frac{(-1)^{\frac{n-1}{2}}d}{p}\right) p^{-(\frac{n+1}{2})}\right)^{-1} \,
\prod_{k=1}^{\frac{n-1}{2}}(1-p^{-2k})^{-1}
\end{equation}
where $(\pm d/p)$ is a Legendre symbol. 
Define a fundamental discriminant $D$ by the formula
\begin{equation}
D = \left\{\begin{array}{ll}
(-1)^{\frac{n-1}{2}}d  & \mbox{ if $(-1)^{\frac{n-1}{2}}d \equiv 1$ mod 4}, \\
4(-1)^{\frac{n-1}{2}}d  & \mbox{ if $(-1)^{\frac{n-1}{2}}d \equiv 3$ mod 4}.
\end{array}\right. 
\end{equation}
For all odd primes $p$, we have 
\begin{equation}
\left( \frac{(-1)^{\frac{n-1}{2}}d}{p}\right) = \left( \frac{D}{p}\right).
\end{equation} 
Hence we have 
\begin{equation}
\prod_{p\, \nmid\, 2d} \frac{2p^{\frac{n(n+1)}{2}}}{E_p(S)} =
\prod_{p\, \nmid\, 2d}  \left(1-\left( \frac{D}{p}\right) p^{-(\frac{n+1}{2})}\right)^{-1}
\prod_{k=1}^{\frac{n-1}{2}}(1-p^{-2k})^{-1}.
\end{equation}

Consider the Dirichlet $L$-series 
\begin{equation}
L(s,D) = \sum_{k=1}^\infty \left(\frac{D}{k}\right)k^{-s}
\end{equation}
where $(D/k)$ is a Kronecker symbol.  
This series converges absolutely for $s > 1$. 
The Kronecker symbol $(D/k)$ is a completely multiplicative function of $k$ by Theorem 1.4.9 in Cohen \cite{Cohen}. 
By Theorem 11.7 in Apostol \cite{A}, this $L$-function has the product formula
\begin{equation}
L(s,D) = \prod_p\left(1-\left(\frac{D}{p}\right)p^{-s}\right)^{-1}.
\end{equation}
Using the product formulas for $\zeta(2k)$ and $L(\frac{n+1}{2},D)$, we obtain the formula
\begin{equation}
\prod_{p\, \nmid\, 2d} \frac{2p^{\frac{n(n+1)}{2}}}{E_p(S)} =
\left(1-\left(\frac{D}{2}\right)2^{-(\frac{n+1}{2})}\!\right) L({\textstyle\frac{n+1}{2}},D)
\!\prod_{k=1}^{\frac{n-1}{2}}\!\left(\prod_{p\, |\, 2d}(1-p^{-2k})\!\right)\!\zeta(2k).
\end{equation}
It remains only to compute $E_{p^3}(S)$ for each prime number $p$ dividing $2d$. 

\section{The computation of $E_8(S)$} 

From now on, we assume that $f = f_d^n$.  
Then $S$ is the $(n+1)\times (n+1)$ diagonal matrix $\mathrm{diag}(1,\ldots, 1,-d)$. 
Let $k$ be a positive integer, and let $\mathbb{Z}_k= \mathbb{Z}/k\mathbb{Z}$. 
Let $\mathrm{O}(n+1,\mathbb{Z}_k)$ be the group of $(n+1)\times(n+1)$ matrices $A$ 
over the ring $\mathbb{Z}_k$ such that $A^tA = I$. 
Let $J$ be the $(n+1)\times (n+1)$ diagonal matrix $\mathrm{diag}(1,\ldots, 1,-1)$. 
Let $\mathrm{O}(n,1;\mathbb{Z}_k)$ be the group of $(n+1)\times(n+1)$ matrices $A$ 
over the ring $\mathbb{Z}_k$ such that $A^tJA = J$. 

Let $\mathrm{O}(S, \mathbb{Z}_k)$ 
be the group of invertible $(n+1)\times(n+1)$ matrices $A$ over the ring $\mathbb{Z}_k$ 
such that $A^tSA = S$. 
Then $E_8(S)$ is the order of $\mathrm{O}(S, \mathbb{Z}_8)$, since $d$ is odd.   

Let $\mathrm{O}^\ast(n+1,\mathbb{Z}_2)$ be the image of $\mathrm{O}(n+1,\mathbb{Z}_4)$ in $\mathrm{O}(n+1,\mathbb{Z}_2)$
under the homomorphism induced by the ring homomorphism from $\mathbb{Z}_4$ to $\mathbb{Z}_2$, 
and let $\mathrm{O}^\ast(n,1;\mathbb{Z}_2)$ be the image of $\mathrm{O}(n,1;\mathbb{Z}_4)$ in 
$\mathrm{O}(n+1,\mathbb{Z}_2)$
under the homomorphism induced by the ring homomorphism from $\mathbb{Z}_4$ to $\mathbb{Z}_2$. 

We know that reduction modulo 2 maps $\mathrm{O}(S, \mathbb{Z}_8)$ into $\mathrm{O}^\ast(n,1;\mathbb{Z}_2)$ 
if $d \equiv  1 \ \mathrm{mod}\ 4$ and into $\mathrm{O}^\ast(n+1,\mathbb{Z}_2)$ if $d \equiv  -1\ \mathrm{mod}\ 4$, 
since the image of $\mathrm{O}(S, \mathbb{Z}_8)$ factors through $\mathrm{O}(S, \mathbb{Z}_4)$. 
We next show that $\mathrm{O}(S, \mathbb{Z}_8)$ maps onto the appropriate $\mathrm{O}^\ast$ subgroup of 
$\mathrm{O}(n+1,\mathbb{Z}_2)$ under reduction modulo 2. 

\begin{lemma} 
If $d \equiv  -1 \ \mathrm{mod}\ 4$, then $\mathrm{O}(S, \mathbb{Z}_8)$ maps onto $\mathrm{O}^\ast(n+1,\mathbb{Z}_2)$
under reduction modulo 2. 
\end{lemma}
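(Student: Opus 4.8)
The plan is to lift matrices one residue level at a time, from $\mathbb{Z}_4$ to $\mathbb{Z}_8$. Since $d\equiv -1\ \mathrm{mod}\ 4$, we have $-d\equiv 1\ \mathrm{mod}\ 4$, so $S\equiv I\ \mathrm{mod}\ 4$ and therefore $\mathrm{O}(S,\mathbb{Z}_4)=\mathrm{O}(n+1,\mathbb{Z}_4)$. As already noted, reduction modulo $2$ then carries $\mathrm{O}(S,\mathbb{Z}_8)$ into $\mathrm{O}^\ast(n+1,\mathbb{Z}_2)$. For surjectivity I fix $\bar A\in\mathrm{O}^\ast(n+1,\mathbb{Z}_2)$, choose by definition a lift $A_4\in\mathrm{O}(n+1,\mathbb{Z}_4)=\mathrm{O}(S,\mathbb{Z}_4)$, and seek $B\in\mathrm{O}(S,\mathbb{Z}_8)$ with $B\equiv\bar A\ \mathrm{mod}\ 2$ obtained by lifting $A_4$ one more step.

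Pick an integer matrix $A$ with $A\equiv A_4\ \mathrm{mod}\ 4$; then $A^tSA=S+4C$ for an integer symmetric matrix $C$. Trying $B=A+4X$, one computes $B^tSB\equiv S+4\bigl(C+X^tSA+A^tSX\bigr)\ \mathrm{mod}\ 8$, so I must solve $C+X^tSA+A^tSX\equiv 0\ \mathrm{mod}\ 2$. Modulo $2$ we have $S\equiv I$ and $A\equiv\bar A$ orthogonal, and writing $M=\bar X^t\bar A$ the off-diagonal part of the congruence becomes $M+M^t=\bar C$, which is always solvable off the diagonal. The diagonal entries, however, are untouched by $X$: the $jj$-correction equals $8\,x_j^tSa_j\equiv 0\ \mathrm{mod}\ 8$, where $a_j$ and $x_j$ are the $j$th columns of $A$ and $X$. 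Thus the only obstruction to the $X$-step is the diagonal vector $(\bar C_{11},\dots,\bar C_{n+1\,n+1})\in\mathbb{F}_2^{\,n+1}$, where $\bar C_{jj}=\tfrac14\bigl(Q(a_j)-S_{jj}\bigr)\ \mathrm{mod}\ 2$ and $Q(x)=x^tSx$.

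To kill this diagonal vector I use the remaining freedom of replacing $A_4$ by another lift $A_4+2W$ of $\bar A$ inside $\mathrm{O}(n+1,\mathbb{Z}_4)$; the admissible $W$ are those with $\bar W=\bar A\,M$ for a symmetric $M$, and such a change alters the $j$th diagonal bit by $\sum_{k\neq j}M_{jk}$. The corrections thus realized are exactly the even-weight vectors of $\mathbb{F}_2^{\,n+1}$, so the entire obstruction collapses to the single bit
\[
\sigma(\bar A)=\sum_{j}\bar C_{jj}=\tfrac14\bigl(\mathrm{tr}(A^tSA)-\mathrm{tr}\,S\bigr)\ \mathrm{mod}\ 2 ,
\]
which is independent of all the choices made. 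Writing $S=I-(1+d)\,e_{n+1}e_{n+1}^t$ with $e_{n+1}$ the last standard basis vector, and letting $r$ be the last row of $A$, gives $\mathrm{tr}(A^tSA)-\mathrm{tr}\,S=\bigl(\mathrm{tr}(A^tA)-(n+1)\bigr)-(1+d)\bigl(|r|^2-1\bigr)$; since $d\equiv -1\ \mathrm{mod}\ 4$ forces $1+d\equiv 0$ and $|r|^2-1\equiv 0\ \mathrm{mod}\ 4$, the second term vanishes modulo $8$ and $\sigma(\bar A)=\tfrac14\bigl(\mathrm{tr}(A^tA)-(n+1)\bigr)\ \mathrm{mod}\ 2$ is exactly the obstruction to lifting $\bar A$ to $\mathrm{O}(n+1,\mathbb{Z}_8)$ for the standard form. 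The main obstacle is to prove that this bit vanishes for every $\bar A\in\mathrm{O}^\ast(n+1,\mathbb{Z}_2)$. Since the $\mathbb{Z}_8$-liftable classes form a subgroup $K$ of $\mathrm{O}^\ast(n+1,\mathbb{Z}_2)$, it suffices to exhibit a generating set of $\mathrm{O}^\ast(n+1,\mathbb{Z}_2)$ inside $K$: the signed-permutation generators lift all the way to $\mathrm{O}(n+1,\mathbb{Z})$ and so have $\sigma=0$, and the remaining generators are lifted to $\mathbb{Z}_8$ by hand. Once $\sigma(\bar A)=0$, the diagonal obstruction is cleared by a suitable $W$ and the off-diagonal congruence is then solved for $X$, producing the required $B\in\mathrm{O}(S,\mathbb{Z}_8)$ lying over $\bar A$.
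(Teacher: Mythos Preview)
Your argument is correct, but it takes a markedly different route from the paper and leaves its final step unperformed.

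The paper's proof is completely direct: it quotes from \cite{RT97} that $\mathrm{O}^\ast(n+1,\mathbb{Z}_2)$ is generated by permutation matrices together with (for $n+1\ge 6$) a single matrix $C$ whose lower $6\times 6$ block has zero diagonal and all-one off-diagonal, and then writes down explicit lifts of these generators to $\mathrm{O}(S,\mathbb{Z}_8)$, splitting into the two cases $d\equiv -1$ and $d\equiv 3\ \mathrm{mod}\ 8$. No obstruction theory appears.

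Your approach is more structural. You set up the lifting problem $\mathbb{Z}_4\to\mathbb{Z}_8$ as a congruence, correctly identify the diagonal of $\bar C$ as the only obstruction, and use the residual $\mathbb{Z}_4$-freedom to collapse it to a single bit $\sigma(\bar A)=\tfrac14\bigl(\mathrm{tr}(A^tSA)-\mathrm{tr}\,S\bigr)\bmod 2$. Your key observation, that for $d\equiv -1\ \mathrm{mod}\ 4$ the term $(1+d)(|r|^2-1)$ vanishes modulo $8$ (both factors being divisible by $4$), is sound and shows that $\sigma$ coincides with the obstruction for the standard positive-definite form $I$. This is a genuine reduction: it proves in one stroke that the image of $\mathrm{O}(S,\mathbb{Z}_8)$ equals that of $\mathrm{O}(n+1,\mathbb{Z}_8)$ for \emph{every} $d\equiv -1\ \mathrm{mod}\ 4$, avoiding the paper's case split.

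The gap is that you stop at ``the remaining generators are lifted to $\mathbb{Z}_8$ by hand'' without doing it. What remains is exactly the content of the paper's case $d\equiv -1\ \mathrm{mod}\ 8$: one must exhibit a lift of the block generator $C$ to $\mathrm{O}(n+1,\mathbb{Z}_8)$, which the paper does by taking diagonal entries $2$ and off-diagonal entries $1$ in the $6\times 6$ block. Equivalently, with that integer lift one computes $\mathrm{tr}(A^tA)-(n+1)=48$, so $\sigma(C)=12\equiv 0\ \mathrm{mod}\ 2$. Once you record this one computation, your proof is complete; but as written, the obstruction machinery ultimately defers to the very generator check that constitutes the paper's whole argument.
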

\begin{proof}  
By the discussion on page 60 of \cite{RT97}, the group $\mathrm{O}^\ast(n+1,\mathbb{Z}_2)$ is generated 
by the permutation matrices and, if $n+1\geq 6$, a matrix $C$ which is the identity except for a lower corner 
$6 \times 6$ block with all $0$ diagonal and all $1$ off-diagonal entries. 

First assume that $d \equiv  -1 \ \mathrm{mod}\ 8$. 
Then $\mathrm{O}(S, \mathbb{Z}_8) = \mathrm{O}(n+1,\mathbb{Z}_8)$. 
The permutation matrices in $\mathrm{O}^\ast(n+1,\mathbb{Z}_2)$ obviously lift to $\mathrm{O}(n+1,\mathbb{Z}_8)$. 
If $n+1\geq 6$, the matrix $C$ lifts to $\mathrm{O}(n+1,\mathbb{Z}_8)$, since the identity part of $C$ lifts as is, 
and the $6\times 6$ block lifts to a block with all $2$ diagonal and all $1$ off-diagonal entries. 

Now assume that $d \equiv  3 \ \mathrm{mod}\ 8$. 
First we argue that all the permutation matrices lift to $\mathrm{O}(S, \mathbb{Z}_8)$. 
Clearly the permutations of the first $n$ coordinates lift. 
Thus it suffices to show that the transposition of the last two coordinates also lifts,  
and the lift is the $2\times 2$ block with diagonal 2, 6 and all 1 off-diagonal entries, that is, 
$$\left(\begin{array}{ll} 2 & 1 \\ 1 & 6\end{array}\right)\left(\begin{array}{rr} 1 & 0 \\ 0 & -3\end{array}\right)
\left(\begin{array}{ll} 2 & 1 \\ 1 & 6\end{array}\right) \equiv 
\left(\begin{array}{rr} 1 & 0 \\ 0 & -3\end{array}\right) \ \mathrm{mod} \ 8.$$
We also need our lower $6\times 6$ corner block to lift. 
The lift has diagonal entries $0,0,0,0,0,4$ and all 1 off-diagonal entries. 
\end{proof}

\begin{lemma} 
If $d \equiv  1 \ \mathrm{mod}\ 4$, then $\mathrm{O}(S, \mathbb{Z}_8)$ maps onto $\mathrm{O}^\ast(n,1;\mathbb{Z}_2)$
under reduction modulo 2. 
\end{lemma}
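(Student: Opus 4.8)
The plan is to follow the proof of Lemma 1. By the discussion on page 60 of \cite{RT97}, the group $\mathrm{O}^\ast(n,1;\mathbb{Z}_2)$ is generated by the permutation matrices of the first $n$ coordinates together with a corner block matrix (as described there) that also involves the last, timelike, coordinate, provided $n$ is large enough. A feature that distinguishes this case from Lemma 1 is that the transposition of the last two coordinates is \emph{not} available as a generator: if $M=\left(\begin{smallmatrix}a&b\\c&e\end{smallmatrix}\right)$ were a lift of it on the relevant $2\times 2$ block of $J$, then $a,e$ would be even and $b,c$ odd, whence the $(1,1)$ entry $a^2-c^2$ of $M^t\,\mathrm{diag}(1,-1)\,M$ would be $\equiv 3\bmod 4$ rather than $1$. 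As in Lemma 1, it therefore suffices to show that each generator of $\mathrm{O}^\ast(n,1;\mathbb{Z}_2)$ lifts to $\mathrm{O}(S,\mathbb{Z}_8)$, since reduction modulo $2$ already carries $\mathrm{O}(S,\mathbb{Z}_8)$ into $\mathrm{O}^\ast(n,1;\mathbb{Z}_2)$.

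Since $d\equiv 1\bmod 4$, I would split into the subcases $d\equiv 1\bmod 8$ and $d\equiv 5\bmod 8$. If $d\equiv 1\bmod 8$, then $-d\equiv -1\bmod 8$, so $S\equiv J\bmod 8$ and hence $\mathrm{O}(S,\mathbb{Z}_8)=\mathrm{O}(n,1;\mathbb{Z}_8)$; in this subcase the required surjectivity onto $\mathrm{O}^\ast(n,1;\mathbb{Z}_2)$ is precisely the $d=1$ computation already carried out in \cite{RT97}, so no new work is needed. If instead $d\equiv 5\bmod 8$, then $-d\equiv 3\bmod 8$, so $S\equiv\mathrm{diag}(1,\ldots,1,3)\bmod 8$. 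The permutations of the first $n$ coordinates are genuine integer permutation matrices preserving $S$, so they lift to $\mathrm{O}(S,\mathbb{Z}_8)$ at once. It remains to exhibit, for the corner block generator that touches the last coordinate, an explicit lift to $\mathrm{O}(S,\mathbb{Z}_8)$, obtained by correcting the diagonal entries by multiples of $2$ and adjusting the signs of the off-diagonal entries so that the defining congruence holds modulo $8$.

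The main obstacle will be exactly this explicit lift in the subcase $d\equiv 5\bmod 8$. Because the $(n+1,n+1)$ entry of $S$ equals $3$ modulo $8$ rather than $\pm 1$, the diagonal congruences in $\tilde C^t S\tilde C\equiv S\bmod 8$ are the delicate ones: the contributions coming from the last row and column are weighted by $3$, and one must choose the even diagonal corrections of the lift $\tilde C$ so that these contributions cancel modulo $8$. This is the analogue of the verification $\left(\begin{smallmatrix}2&1\\1&6\end{smallmatrix}\right)\left(\begin{smallmatrix}1&0\\0&-3\end{smallmatrix}\right)\left(\begin{smallmatrix}2&1\\1&6\end{smallmatrix}\right)\equiv\left(\begin{smallmatrix}1&0\\0&-3\end{smallmatrix}\right)\bmod 8$ used in Lemma 1, and the same kind of direct matrix computation should produce the needed lifts here. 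Once each generator is lifted, reduction modulo $2$ maps $\mathrm{O}(S,\mathbb{Z}_8)$ onto all of $\mathrm{O}^\ast(n,1;\mathbb{Z}_2)$, completing the proof.
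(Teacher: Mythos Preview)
Your approach matches the paper's: split into $d\equiv 1\bmod 8$ (reduce to Lemma~4 of \cite{RT97}) and $d\equiv 5\bmod 8$ (lift generators explicitly). Two points need sharpening.

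First, your list of generators of $\mathrm{O}^\ast(n,1;\mathbb{Z}_2)$ is incomplete. The permutation matrices of the first $n$ coordinates do \emph{not} generate all of $\mathrm{O}^\ast(n,\mathbb{Z}_2)$ once $n\geq 6$; one also needs the $6\times 6$ spacelike corner block from the description on page~60 of \cite{RT97}. The paper phrases the generation as: $\mathrm{O}^\ast(n,1;\mathbb{Z}_2)$ is generated by the subgroup $\mathrm{O}^\ast(n,\mathbb{Z}_2)$ (fixing $e_{n+1}$) together with a matrix $C$ that is the identity except for a \emph{$4\times 4$} lower corner block with zero diagonal and all-ones off-diagonal. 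Note the size is $4$, not $6$; this is the content of the proof of Lemma~4 in \cite{RT97}. The $\mathrm{O}^\ast(n,\mathbb{Z}_2)$ part then lifts exactly as in the $d\equiv -1\bmod 8$ case of Lemma~1 (permutations as is, and the spacelike $6\times 6$ block with all-$2$ diagonal), since these generators do not touch the last coordinate.

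Second, you stop short of the decisive computation: the explicit lift of the $4\times 4$ corner block when $d\equiv 5\bmod 8$. The paper supplies it: take the $4\times 4$ block with diagonal entries $2,2,2,4$ and all off-diagonal entries $1$. One checks directly that this block $\tilde C$ satisfies $\tilde C^{t}\,\mathrm{diag}(1,1,1,-d)\,\tilde C\equiv \mathrm{diag}(1,1,1,-d)\bmod 8$ when $-d\equiv 3\bmod 8$. With this lift in hand your outline becomes a complete proof, identical in substance to the paper's.
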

\begin{proof}
The proof for the case $d \equiv  1 \ \mathrm{mod}\ 8$ follows from Lemma 4 in \cite{RT97}.  
Now assume $d \equiv  -3 \ \mathrm{mod}\ 8$. 
Identify $\mathrm{O}^\ast(n,\mathbb{Z}_2)$ with the subgroup of $\mathrm{O}^\ast(n,1;\mathbb{Z}_2)$ fixing 
the last standard basis vector $e_{n+1}$.  
By the proof of Lemma 4 in \cite{RT97}, the group $\mathrm{O}^\ast(n,1;\mathbb{Z}_2)$ is generated 
by the subgroup $\mathrm{O}^\ast(n,\mathbb{Z}_2)$ and a matrix $C$ which is the identity except for a lower corner 
$4 \times 4$ block with all $0$ diagonal and all $1$ off-diagonal entries. 
The subgroup $\mathrm{O}^\ast(n,\mathbb{Z}_2)$ lifts to $\mathrm{O}(S, \mathbb{Z}_8)$ 
as in the case $d \equiv -1\ \mathrm{mod} \ 8$ in Lemma 1. 
The $4\times 4$ lower corner block of $C$ lifts to the $4 \times 4$ block with diagonal $2, 2, 2, 4$ and all 1 off-diagonal entries. 
\end{proof}

\begin{lemma} 
Let  $\eta: \mathrm{O}(S, \mathbb{Z}_8) \to \mathrm{O}(S, \mathbb{Z}_2)$ be the homomorphism induced by reduction modulo 2. 
The kernel of $\eta$ has order $2^{n^2+2n+2}$. 
\end{lemma}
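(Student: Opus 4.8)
The plan is to count the kernel directly by writing every matrix in $\mathrm{O}(S,\mathbb{Z}_8)$ that reduces to the identity mod $2$ as $A = I + 2B$ with $B$ a matrix over $\mathbb{Z}_4$. Such an $A$ is automatically invertible over $\mathbb{Z}_8$, since its determinant is odd, so $A \mapsto B$ is a bijection from $\ker\eta$ onto the set of $B \in M_{n+1}(\mathbb{Z}_4)$ satisfying $A^t S A \equiv S \pmod 8$. Expanding $A^tSA = S + 2(B^tS+SB) + 4B^tSB$, the defining condition becomes the symmetric matrix congruence
\[
B^tS + SB + 2B^tSB \equiv 0 \pmod 4
\]
over $\mathbb{Z}_4$, so $|\ker\eta|$ equals the number of its solutions $B$.

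Next I would resolve the congruence one $2$-adic digit at a time by writing $B = B_0 + 2B_1$, where $B_0, B_1$ are the unique $0/1$ lifts of matrices over $\mathbb{Z}_2$. Since $S \equiv I \pmod 2$ (all diagonal entries of $S$ are odd) and $2B^tSB \equiv 2B_0^tSB_0 \pmod 4$, the congruence reduces mod $2$ to $B_0^t + B_0 \equiv 0 \pmod 2$, i.e. $B_0$ must be symmetric. For such $B_0$ the integer matrix $B_0^tS + SB_0$ has even entries, so writing $2G_0 = B_0^tS+SB_0$ and dividing the congruence by $2$ leaves a \emph{linear} congruence for $B_1$, namely $B_1 + B_1^t \equiv G_0 + B_0^tB_0 \pmod 2$.

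The crux is to count the pairs $(B_0,B_1)$ solving this. The map $B_1 \mapsto B_1 + B_1^t$ sends $M_{n+1}(\mathbb{Z}_2)$ onto the space of symmetric matrices with zero diagonal, with each fibre a coset of the space of symmetric matrices, of size $2^{(n+1)(n+2)/2}$. Hence, for a fixed symmetric $B_0$, the equation is solvable if and only if $G_0 + B_0^tB_0$ has zero diagonal mod $2$, and when solvable it has exactly $2^{(n+1)(n+2)/2}$ solutions $B_1$. A short computation of diagonal entries (using that $S$ is diagonal with odd entries, so the diagonal of $G_0$ is $\equiv (B_0)_{ii}$ and the diagonal of $B_0^tB_0$ is the $i$-th column sum of $B_0$) shows the solvability condition is exactly that every row of $B_0$ has even off-diagonal sum. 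I expect this step -- identifying the obstruction to lifting from $\mathbb{Z}_4$ to $\mathbb{Z}_8$ with a parity condition on $B_0$ -- to be the main obstacle, since it is where the factor distinguishing $2^{n^2+2n+2}$ from the naive count enters.

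Finally I would count. Interpreting the off-diagonal part of a symmetric $B_0$ as an edge-labelling of the complete graph $K_{n+1}$, the even off-diagonal row-sum condition says every vertex has even degree, so the admissible off-diagonal parts form the cycle space, of dimension $\binom{n+1}{2} - n = \binom{n}{2}$, while the $n+1$ diagonal entries remain free. Thus the number of admissible $B_0$ is $2^{\binom{n}{2} + n + 1} = 2^{(n^2+n+2)/2}$, and multiplying by the $2^{(n+1)(n+2)/2}$ choices of $B_1$ for each gives
\[
|\ker\eta| = 2^{(n^2+n+2)/2}\cdot 2^{(n^2+3n+2)/2} = 2^{n^2+2n+2},
\]
as claimed.
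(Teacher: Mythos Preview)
Your proof is correct and follows essentially the same $2$-adic decomposition as the paper: your $B_0,B_1$ are the paper's $U,V$ in $M=I+2U+4V$, and your expansion $B^tS+SB+2B^tSB\equiv 0\pmod 4$ matches the paper's $2(U^tS+SU)+4(U^tU+V^t+V)\equiv 0\pmod 8$ after the simplification $4S\equiv 4I\pmod 8$.

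The only real difference is in how the count is finished. The paper further simplifies $2S$ to $2J$ or $2I$ according as $d\equiv 1$ or $-1\pmod 4$, and then defers the actual enumeration to Theorem~5 of \cite{RT97}. You instead keep $S$ itself, observe that for symmetric $B_0$ the matrix $B_0^tS+SB_0$ is even entrywise (since each $S_{ii}+S_{jj}$ is even), define $G_0$ uniformly, and reduce the lifting obstruction to the diagonal parity condition $\sum_{k\neq i}(B_0)_{ik}\equiv 0\pmod 2$. Recognising the admissible off-diagonal parts as the cycle space of $K_{n+1}$ is a clean way to get the factor $2^{\binom{n}{2}}$ and makes your argument self-contained and independent of $d\pmod 4$, whereas the paper's case split buys a slightly simpler form for $G_0$ (essentially $B_0$ or $JB_0$) at the cost of invoking the earlier paper.
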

\begin{proof}
Each matrix $M$ in $ \mathrm{O}(S, \mathbb{Z}_8)$ can be written uniquely in the form $N + 2U+4V$ 
where $N, U$, and $V$ are zero-one matrices and we are interested in counting the $M$ with $N = I$. 
Now observe that 
$$M^tSM = S + 2(U^tS + SU) + 4(U^tSU + V^tS + SV).$$
Using the fact that $4(-d) \equiv 4$ mod 8, the last equation simplifies to 
$$M^tSM = S + 2(U^tS + SU) + 4(U^tU + V^t + V).$$
First assume that $d \equiv 1$ mod 4.  
Then $2(-d) \equiv 2(-1)$ mod 8 and the last equation simplifies to 
$$M^tSM = S + 2(U^tJ + JU) + 4(U^tU + V^t + V).$$
The proof now proceeds as in the proof of Theorem 5 of \cite{RT97}. 

Now assume that $d \equiv -1$ mod 4. 
Then $2(-d) \equiv 2$ mod 8 and the next to last equation simplifies to 
$$M^tSM = S + 2(U^t + U) + 4(U^tU + V^t + V).$$
The proof now proceeds as in the proof of Theorem 5 of \cite{RT97} with the simplification that $W = 0$. 
\end{proof}

Let $\epsilon_2(k) = 1$ if $k$ is even and $0$ if $k$ is odd and define 
\begin{equation}
\alpha(n)\,=\, \prod^n_{k=1}\left(2^k-\epsilon_2(k)\right).
\end{equation}
Define
\begin{equation}
\beta(n)\,=\,2^{n-2}+2^{(n-2)/2}\cos(n\pi/4),
\end{equation}
and
\begin{equation}
\gamma(n)\,=\,2^n+2^{n/2}\cos(n\pi/4).
\end{equation}
Note that the function $f(n) = \cos(n\pi/4)$ is periodic with period 8.  The values of $f(n)$ for $n = 0,1, \ldots, 7$ 
are $1, 2^{-1/2}, 0, -2^{-1/2}, -1, -2^{-1/2}, 0, 2^{-1/2}$, respectively. 

\begin{theorem} 
The value of $E_8(S)$ is given by 
$$E_8(S) = \left\{\begin{array}{ll} 
2^{n^2 + 2n +2}\alpha(n)/\beta(n+1) & \mbox{if  $d \equiv -1\ \mathrm{mod\ 4}$,} \\
2^{n^2 + 2n +2}\alpha(n)/\gamma(n-1) & \mbox{if  $d \equiv 1\ \mathrm{mod\ 4}$.}
\end{array}\right.$$
\end{theorem}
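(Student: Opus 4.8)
The plan is to evaluate $E_8(S) = |\mathrm{O}(S,\mathbb{Z}_8)|$ by factoring the reduction-modulo-$2$ homomorphism $\eta\colon \mathrm{O}(S,\mathbb{Z}_8)\to\mathrm{O}(S,\mathbb{Z}_2)$ of Lemma 3 through its image. Since $d$ is odd we have $S\equiv I\bmod 2$, so the target is $\mathrm{O}(S,\mathbb{Z}_2)=\mathrm{O}(n+1,\mathbb{Z}_2)$, and the first isomorphism theorem gives $|\mathrm{O}(S,\mathbb{Z}_8)| = |\ker\eta|\cdot|\mathrm{Im}\,\eta|$. Lemma 3 already supplies $|\ker\eta| = 2^{n^2+2n+2}$, which is exactly the power of $2$ standing in front of both cases of the asserted formula. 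Thus the whole content of the theorem is concentrated in the determination of $|\mathrm{Im}\,\eta|$.

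To pin down the image I would combine the discussion preceding Lemma 1 with Lemmas 1 and 2. That discussion shows $\eta$ carries $\mathrm{O}(S,\mathbb{Z}_8)$ into $\mathrm{O}^\ast(n+1,\mathbb{Z}_2)$ when $d\equiv -1\bmod 4$ and into $\mathrm{O}^\ast(n,1;\mathbb{Z}_2)$ when $d\equiv 1\bmod 4$, because the reduction factors through $\mathrm{O}(S,\mathbb{Z}_4)$; Lemmas 1 and 2 then give the reverse inclusions by lifting a generating set of the relevant $\mathrm{O}^\ast$ group back to $\mathrm{O}(S,\mathbb{Z}_8)$. Hence $\mathrm{Im}\,\eta = \mathrm{O}^\ast(n+1,\mathbb{Z}_2)$ in the first case and $\mathrm{Im}\,\eta = \mathrm{O}^\ast(n,1;\mathbb{Z}_2)$ in the second. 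Substituting into the product reduces the theorem to the two order computations $|\mathrm{O}^\ast(n+1,\mathbb{Z}_2)| = \alpha(n)/\beta(n+1)$ and $|\mathrm{O}^\ast(n,1;\mathbb{Z}_2)| = \alpha(n)/\gamma(n-1)$.

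These two order formulas are the heart of the matter, and I would take them from the structure theory of \cite{RT97}. The $d=1$ computation carried out there is precisely the case $d\equiv 1\bmod 4$ and yields $|\mathrm{O}^\ast(n,1;\mathbb{Z}_2)| = \alpha(n)/\gamma(n-1)$, while the companion Euclidean group $\mathrm{O}^\ast(n+1,\mathbb{Z}_2)$, whose generators are recorded on page 60 of \cite{RT97}, has order $\alpha(n)/\beta(n+1)$. Each order is obtained by an induction on $n$ that peels off the stabilizer of the last basis vector, using the generating sets (permutation matrices together with the corner block $C$) cited from \cite{RT97}; the resulting arithmetic is what the functions $\alpha$, $\beta$, and $\gamma$ encode. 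The decisive feature, and the reason for the two different denominators, is that the relevant quadratic form over $\mathbb{F}_2$ changes Witt type with $n\bmod 8$, and this periodicity is exactly what the factor $\cos(n\pi/4)$ in the definitions of $\beta$ and $\gamma$ records.

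The routine part of the proof is the group-theoretic bookkeeping: granting the three lemmas, the factorization $E_8(S) = 2^{n^2+2n+2}\cdot|\mathrm{Im}\,\eta|$ and the identification of $\mathrm{Im}\,\eta$ are immediate. I expect the main obstacle to be keeping the $\bmod\ 8$ dependence of the Witt type correctly aligned across the two congruence classes of $d$, so that $\alpha(n)/\beta(n+1)$ and $\alpha(n)/\gamma(n-1)$ genuinely reproduce the orders of the two $\mathrm{O}^\ast$ groups. As a safeguard I would test the formula against small cases before trusting it in general: for $n+1<6$ the Euclidean group $\mathrm{O}^\ast(n+1,\mathbb{Z}_2)$ is just the symmetric group $S_{n+1}$, and indeed $\alpha(n)/\beta(n+1)$ equals $(n+1)!$ there.
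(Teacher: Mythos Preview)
Your proposal is correct and follows the same route as the paper: factor $E_8(S)=|\ker\eta|\cdot|\mathrm{Im}\,\eta|$, use Lemma~3 for the kernel, use the discussion before Lemma~1 together with Lemmas~1 and~2 to identify the image as the appropriate $\mathrm{O}^\ast$ group, and then import the orders $|\mathrm{O}^\ast(n+1,\mathbb{Z}_2)|=\alpha(n)/\beta(n+1)$ and $|\mathrm{O}^\ast(n,1;\mathbb{Z}_2)|=\alpha(n)/\gamma(n-1)$ from \cite{RT97}. The paper's proof is simply the terse version of what you wrote, citing Lemmas~2 and~3 of \cite{RT97} for those two orders and then invoking Lemmas~1--3.
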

\begin{proof}
By Lemmas 2 of  \cite{RT97}, the order of $\mathrm{O}^*(n+1,\mathbb{Z}_2)$ is $\alpha(n)/\beta(n+1)$, 
and by Lemma 3 of \cite{RT97}, the order $\mathrm{O}^*(n,1;\mathbb{Z}_2)$ is $\alpha(n)/\gamma(n-1)$. 
The Theorem now follows from Lemmas 1, 2, 3. 
\end{proof}

\begin{corollary} 
We have that
$$\frac{2\cdot 2^{\frac{3n(n+1)}{2}}}{E_8(S)} = 
\frac{2^{\frac{n-1}{2}}+\cos((n+(-1)^{\frac{d+1}{2}})\pi/4)}{2^{\frac{n+3}{2}}{\displaystyle \prod_{k=1}^{[\frac{n}{2}]}(1-2^{-2k})}}.$$
\end{corollary}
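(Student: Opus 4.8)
The plan is to prove the Corollary by a direct substitution of the value of $E_8(S)$ furnished by Theorem 4, after first putting $\alpha(n)$ into a cleaner form. First I would split the product defining $\alpha(n)$ according to the parity of $k$: for odd $k$ the factor is $2^k$, while for even $k=2j$ the factor is $2^{2j}-1 = 2^{2j}(1-2^{-2j})$. Collecting the powers of $2$ via $\sum_{k=1}^n k = n(n+1)/2$ then yields $\alpha(n) = 2^{n(n+1)/2}\prod_{k=1}^{[n/2]}(1-2^{-2k})$, which already produces the product appearing in the denominator of the target expression.

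Next I would substitute into $2\cdot 2^{3n(n+1)/2}/E_8(S)$. In the case $d\equiv -1$ mod $4$ this equals $2\cdot 2^{3n(n+1)/2}\,\beta(n+1)/(2^{n^2+2n+2}\alpha(n))$, and after inserting the simplified $\alpha(n)$ the pure power of $2$ collapses: the exponent $1 + \tfrac{3n(n+1)}{2} - (n^2+2n+2) - \tfrac{n(n+1)}{2}$ reduces to $-(n+1)$, leaving $2^{-1-n}\beta(n+1)\big/\prod_{k=1}^{[n/2]}(1-2^{-2k})$. Writing $\beta(n+1) = 2^{n-1} + 2^{(n-1)/2}\cos((n+1)\pi/4)$ and factoring out $2^{-(n+3)/2}$, the two surviving terms become $2^{(n-1)/2}$ and $\cos((n+1)\pi/4)$, which is exactly the numerator of the Corollary with cosine argument $(n+1)\pi/4$. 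The case $d\equiv 1$ mod $4$ is handled identically, using $\gamma(n-1) = 2^{n-1} + 2^{(n-1)/2}\cos((n-1)\pi/4)$ in place of $\beta(n+1)$; the same exponent bookkeeping produces $\cos((n-1)\pi/4)$.

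The final step is to unify the two cases into the single stated formula. I would observe that $(-1)^{(d+1)/2} = +1$ precisely when $d\equiv 3$ mod $4$, since then $(d+1)/2$ is even, and $(-1)^{(d+1)/2} = -1$ precisely when $d\equiv 1$ mod $4$, since then $(d+1)/2$ is odd. Hence the cosine argument $(n+1)\pi/4$ coming from the case $d\equiv -1$ mod $4$ and the argument $(n-1)\pi/4$ coming from the case $d\equiv 1$ mod $4$ are both captured by the single expression $\cos((n+(-1)^{(d+1)/2})\pi/4)$, which gives the asserted identity.

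I do not expect a genuine obstacle here, as the computation is mechanical. The only point demanding care is the exponent arithmetic on the powers of $2$: one must carry along the contributions from $2\cdot 2^{3n(n+1)/2}$, from $2^{n^2+2n+2}$, and from the $2^{n(n+1)/2}$ hidden inside $\alpha(n)$, and then factor out $2^{-(n+3)/2}$ at exactly the right moment so that the two remaining terms land as $2^{(n-1)/2}$ and the cosine. Confirming the parity of $(d+1)/2$ against the two residue classes mod $4$ is the other place where a sign error could slip in, and I would verify it explicitly.
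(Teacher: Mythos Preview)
Your proposal is correct and follows essentially the same approach as the paper: simplify $\alpha(n)$ to $2^{n(n+1)/2}\prod_{k=1}^{[n/2]}(1-2^{-2k})$, substitute the value of $E_8(S)$ from the preceding theorem, collapse the powers of $2$, and then merge the two residue classes of $d$ mod $4$ via the sign $(-1)^{(d+1)/2}$. The only cosmetic difference is that the paper simplifies $\prod_{k=1}^n(2^k-\epsilon_2(k))$ by treating odd and even $n$ in separate displays, whereas you handle both parities at once by splitting the product over odd and even $k$; your version is marginally cleaner but mathematically identical.
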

\begin{proof}
By Theorem 1, we have that 
$$\frac{2\cdot 2^{\frac{3n(n+1)}{2}}}{E_8(S)} = 
\frac{2\cdot 2^{\frac{3n(n+1)}{2}}(2^{n-1}+2^{\frac{n-1}{2}}\cos((n+(-1)^{\frac{d+1}{2}})\pi/4))}
{2^{n^2+2n+2}{\displaystyle\prod_{k=1}^n(2^k-\epsilon_2(k))}}.$$
If $n$ is odd, we have that 
\begin{eqnarray*}
{\displaystyle\prod_{k=1}^n(2^k-\epsilon_2(k))} 
& = & {\displaystyle 2\prod_{k=1}^{\frac{n-1}{2}}2^{2k+1}(2^{2k}-1)} \\
& = & {\displaystyle 2\prod_{k=1}^{\frac{n-1}{2}}2^{4k+1}(1-2^{-2k})} 
\ \ = \ \  2^{\frac{n(n+1)}{2}}{\displaystyle \prod_{k=1}^{\frac{n-1}{2}}(1-2^{-2k})}.
\end{eqnarray*}
If $n$ is even, we have that 
\begin{eqnarray*}
{\displaystyle\prod_{k=1}^n(2^k-\epsilon_2(k))} 
& = & {\displaystyle \prod_{k=1}^{\frac{n}{2}}2^{2k-1}(2^{2k}-1)} \\
& = & {\displaystyle \prod_{k=1}^{\frac{n}{2}}2^{4k-1}(1-2^{-2k})} 
\ \ = \ \  2^{\frac{n(n+1)}{2}}{\displaystyle \prod_{k=1}^{\frac{n}{2}}(1-2^{-2k})}.
\end{eqnarray*}
Therefore, we have that 
\begin{eqnarray*}
\frac{2\cdot 2^{\frac{3n(n+1)}{2}}}{E_8(S)} 
& = & \frac{ 2\cdot 2^{\frac{3n(n+1)}{2}}2^{\frac{n-1}{2}}(2^{\frac{n-1}{2}}+\cos((n+(-1)^{\frac{d+1}{2}})\pi/4))}{
2^{n^2+2n+2}\,2^{\frac{n(n+1)}{2}}{\displaystyle \prod_{k=1}^{[\frac{n}{2}]}(1-2^{-2k})}} \\
& = & \frac{2^{\frac{n-1}{2}}+\cos((n+(-1)^{\frac{d+1}{2}})\pi/4)}{2^{\frac{n+3}{2}}{\displaystyle \prod_{k=1}^{[\frac{n}{2}]}(1-2^{-2k})}}.
\end{eqnarray*}

\vspace{-.2in}
\end{proof}

\section{The computation of $E_{p^3}(S)$}  

Let $p$ be a prime number that divides the odd, positive, square-free integer $d$.  
In this section, we determine the value of $E_{p^3}(S)$. 

Let $a$ be a positive integer with $a \geq 2$, and let $M \in \mathrm{GL}(n+1,\mathbb{Z}_{p^a})$. Define
\begin{equation}
\alpha(M) = \mathrm{diag}(1,1,\ldots, 1,0) \cdot M,
\end{equation}
\begin{equation}
\beta(M) = \mathrm{diag}(0,0,\ldots, 0,1) \cdot M.
\end{equation}
Then $\alpha(M)$ is the matrix $M$ with its last row set to zero, and $\beta(M)$ is $M$ with its first $n$ rows set to zero. 
Note that $M = \alpha(M) + \beta(M)$.  

Define $\overline{\beta}(M) = \beta(M)$ reduced modulo $p^{a-1}$ with entries in the range $[0,p^{a-1})$. 
Define $\overline{M} = \alpha(M) + \overline{\beta}(M)$. 
Then $M-\overline{M} = p^{a-1} U$ for $U = \beta(U)$ with entries in the range $[0, p)$. 
Define 
\begin{equation}
\overline{\mathrm{O}}(S,\mathbb{Z}_{p^a}) = \{\overline{M} : M \in \mathrm{O}(S,\mathbb{Z}_{p^a})\}.
\end{equation}

\begin{lemma} 
Let $M \in \mathrm{GL}(n+1,\mathbb{Z}_{p^a})$ with $a \geq 2$.  Then $M \in \mathrm{O}(S,\mathbb{Z}_{p^a})$ 
if and only if $\overline{M} \in \mathrm{O}(S,\mathbb{Z}_{p^a})$.  
Moreover 
$$E_{p^a}(S) = |\mathrm{O}(S,\mathbb{Z}_{p^a})| = p^{n+1}|\overline{\mathrm{O}}(S,\mathbb{Z}_{p^a})|.$$
\end{lemma}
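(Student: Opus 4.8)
The plan is to show that passing from $M$ to $\overline M$—which only reduces the last row modulo $p^{a-1}$—preserves both the relation $A^tSA=S$ and invertibility, and then to count the fibres of $M \mapsto \overline M$. Write $d = pd'$ with $p\nmid d'$, so $d'$ is a unit modulo $p^a$. Using $M = \overline M + p^{a-1}U$ with $U = \beta(U)$, I would expand
$$M^tSM = \overline M^tS\overline M + p^{a-1}\bigl(U^tS\overline M + \overline M^tSU\bigr) + p^{2a-2}U^tSU.$$
Since $a \geq 2$ we have $2a-2\geq a$, so the last term is $0$ modulo $p^a$. The key point is that $U=\beta(U)$ is supported on its last row, so $U = e_{n+1}u$ for a row vector $u$, whence $SU = (Se_{n+1})u = -d\,e_{n+1}u$ and $U^tS = -d\,u^te_{n+1}^t$; thus the middle term is divisible by $p^{a-1}d = p^ad'$ and also vanishes modulo $p^a$. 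Hence $M^tSM \equiv \overline M^tS\overline M \pmod{p^a}$. Since $M \equiv \overline M \pmod p$ (as $a-1\geq 1$), one of the two is invertible over $\mathbb{Z}_{p^a}$ if and only if the other is, and combining this with the congruence gives the equivalence $M \in \mathrm{O}(S,\mathbb{Z}_{p^a}) \Leftrightarrow \overline M \in \mathrm{O}(S,\mathbb{Z}_{p^a})$.

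For the equality $E_{p^a}(S) = |\mathrm{O}(S,\mathbb{Z}_{p^a})|$ I must check that any $A$ with $A^tSA=S$ is automatically invertible, even though $S$ is singular modulo $p$. Taking determinants gives $d(\det A)^2 = d$, so $pd'\bigl((\det A)^2-1\bigr) \equiv 0 \pmod{p^a}$; cancelling the unit $d'$ and one factor of $p$ yields $(\det A)^2 \equiv 1 \pmod{p^{a-1}}$, and as $a-1\geq 1$ this forces $\det A$ to be a unit modulo $p$, hence in $\mathbb{Z}_{p^a}$. For the last equality I would count the fibres of the surjection $M \mapsto \overline M$ onto $\overline{\mathrm{O}}(S,\mathbb{Z}_{p^a})$. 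Fixing a reduced $\overline M_0$ in the image, the matrices reducing to it are exactly $\overline M_0 + p^{a-1}U$ with $U = \beta(U)$ and the $n+1$ last-row entries ranging over $[0,p)$, so there are $p^{n+1}$ of them; each reduces to $\overline M_0$ modulo $p$ and hence is invertible, and each lies in $\mathrm{O}(S,\mathbb{Z}_{p^a})$ by the equivalence above. Every fibre thus has exactly $p^{n+1}$ elements, giving $|\mathrm{O}(S,\mathbb{Z}_{p^a})| = p^{n+1}|\overline{\mathrm{O}}(S,\mathbb{Z}_{p^a})|$.

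The crux, and the step needing the most care, is the vanishing of the middle term $p^{a-1}\bigl(U^tS\overline M + \overline M^tSU\bigr)$ modulo $p^a$: it works precisely because $U$ occupies only the last row, so multiplication by $S$ introduces the factor $-d$ and hence the extra power of $p$ absorbed by $p^{a-1}$. The remaining points—that invertibility is detected modulo $p$, and that the determinant computation genuinely uses $a \geq 2$—are routine but worth stating explicitly.
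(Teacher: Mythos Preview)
Your proof is correct and follows essentially the same approach as the paper: you expand $M^tSM$ using $M=\overline M+p^{a-1}U$, observe that $U$ being supported on the last row makes the cross terms divisible by $p^{a-1}d$ and hence by $p^a$, use the determinant identity to deduce invertibility (modulo $p^{a-1}$, hence modulo $p$), and count the $p^{n+1}$ choices of $U$ in each fibre. Your write-up is slightly more explicit in isolating $d=pd'$ and in checking that $\overline M$ inherits invertibility from $M$, but the argument is the same as the paper's.
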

\begin{proof} 
Let $M$ be an $(n+1)\times (n+1)$ matrix over $\mathbb{Z}$ such that $M^tSM \equiv S$ mod $p^a$. 
Then $(\mathrm{det}(M))^2(-d) \equiv -d$ mod $p^a$.  Hence $(\mathrm{det}(M))^2 \equiv 1$ mod $p^{a-1}$,  
and so $\mathrm{det}(M)$ is invertible modulo $p^a$. Hence the reduction of $M$ modulo $p^a$ is invertible. 
Therefore $E_{p^a}(S) = |\mathrm{O}(S,\mathbb{Z}_{p^a})|$. 

Now suppose $M$ is an $(n+1)\times (n+1)$ matrix over $\mathbb{Z}_{p^a}$. 
Observe that 
\begin{eqnarray*}
M^t SM & = & (\overline{M} + p^{a-1}U)^t S(\overline{M} + p^{a-1}U) \\
& = & \overline{M}^t S\overline{M} + p^{a-1}U^t S\overline{M} + p^{a-1}\overline{M}^t SU + p^{2a-2}U^t SU \\ 
& = & \overline{M}^tS\overline{M} + p^{a-1}(-d)U^t\overline{M} + p^{a-1}(-d)\overline{M}^tU \\
& = & \overline{M}^tS\overline{M}.
\end{eqnarray*} 
Thus $M \in \mathrm{O}(S,\mathbb{Z}_{p^a})$ if and only if $\overline{M} \in \mathrm{O}(S,\mathbb{Z}_{p^a})$. 
In the equation $M-\overline{M} = p^{a-1} U$, there are $p^{n+1}$ choices for $U$.  
Hence for each $\overline{M}$ there are $p^{n+1}$ choices for $M$. 
Therefore $|\mathrm{O}(S,\mathbb{Z}_{p^a})| = p^{n+1}|\overline{\mathrm{O}}(S,\mathbb{Z}_{p^a})|$. 
\end{proof}

\begin{lemma} 
If $a \geq 2$, we have that $|\overline{\mathrm{O}}(S,\mathbb{Z}_{p^{a+1}})| 
= p^{\frac{n(n+1)}{2}}|\overline{\mathrm{O}}(S,\mathbb{Z}_{p^a})|$.

\end{lemma}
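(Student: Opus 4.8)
The plan is to produce a reduction map
\[
\overline{\mathrm{O}}(S,\mathbb{Z}_{p^{a+1}}) \longrightarrow \overline{\mathrm{O}}(S,\mathbb{Z}_{p^a}),
\]
show it is surjective, and show that every fiber has exactly $p^{\frac{n(n+1)}{2}}$ elements; the identity follows at once by summing over fibers. To set this up I would first make the map precise. An element $\overline{M}\in\overline{\mathrm{O}}(S,\mathbb{Z}_{p^{a+1}})$ is represented by an integer matrix whose first $n$ rows lie in $[0,p^{a+1})$ and whose last row lies in $[0,p^a)$, with $\overline{M}^tS\overline{M}\equiv S \pmod{p^{a+1}}$. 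Reducing its entries modulo $p^a$ gives a matrix in $\mathrm{O}(S,\mathbb{Z}_{p^a})$ (orthogonality mod $p^{a+1}$ forces it mod $p^a$, and invertibility is as in Lemma 5), and applying the bar operation produces an element of $\overline{\mathrm{O}}(S,\mathbb{Z}_{p^a})$; concretely this amounts to reducing the first $n$ rows modulo $p^a$ and the last row modulo $p^{a-1}$, so the map is well defined.

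Next I would parametrize the fiber over a fixed $\overline{N}\in\overline{\mathrm{O}}(S,\mathbb{Z}_{p^a})$ with representative $N_0$, so that $N_0^tSN_0 = S + p^aR$ for a symmetric integer matrix $R$. Every lift in the fiber can be written uniquely as $\overline{M}=N_0+p^a\hat A+p^{a-1}\hat B$, where $\hat A$ is supported on the first $n$ rows and $\hat B$ on the last row, both with entries in $[0,p)$, giving $p^{(n+1)^2}$ candidates. Expanding $\overline{M}^tS\overline{M}$ modulo $p^{a+1}$ and using $S\hat A=\hat A$, $S\hat B=-d\hat B$, together with $d=pd'$ where $p\nmid d'$, I would check that the terms of orders $p^{2a-2},p^{2a-1},p^{2a}$ all vanish modulo $p^{a+1}$ precisely because $a\ge 2$ (this is where the hypothesis is needed). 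What survives is the single symmetric linear congruence
\[
R + \hat A^tN_0 + N_0^t\hat A - d'\bigl(\hat B^tN_0 + N_0^t\hat B\bigr)\equiv 0 \pmod p .
\]

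The main obstacle is to count the solutions of this congruence exactly, and I would resolve it by the substitution $C=\hat A-d'\hat B$. Since $\hat A$ ranges freely over matrices supported on the first $n$ rows, $\hat B$ over matrices supported on the last row, and $d'$ is a unit mod $p$, the assignment $(\hat A,\hat B)\mapsto C$ is a bijection onto all $(n+1)\times(n+1)$ matrices over $\mathbb{F}_p$, and the left side becomes $R+C^tN_0+N_0^tC$. Because $\overline N$, hence $N_0$, is invertible modulo $p$, writing $C=N_0^{-t}K$ turns $C^tN_0+N_0^tC$ into $K^t+K$, which surjects onto the space of symmetric matrices as $p$ is odd. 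Hence the congruence is solvable for every $R$, and its solution space has dimension $(n+1)^2-\frac{(n+1)(n+2)}{2}=\frac{n(n+1)}{2}$. Thus each fiber contains exactly $p^{\frac{n(n+1)}{2}}$ elements (invertibility of each $\overline{M}$ being automatic from $\overline{M}\equiv N_0 \pmod p$), the reduction map is surjective, and summing over the fibers yields $|\overline{\mathrm{O}}(S,\mathbb{Z}_{p^{a+1}})|=p^{\frac{n(n+1)}{2}}|\overline{\mathrm{O}}(S,\mathbb{Z}_{p^a})|$.
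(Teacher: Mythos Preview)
Your proof is correct and follows essentially the same approach as the paper's. Your decomposition $\overline{M}=N_0+p^a\hat A+p^{a-1}\hat B$ matches the paper's $M=N+p^a\alpha(U)+p^{a-1}\beta(U)$, your substitution $C=\hat A-d'\hat B$ is exactly the paper's $V=\alpha(U)+(-d/p)\beta(U)$, and your $K=N_0^tC$ is the paper's $W=N^tV$; you are also slightly more explicit than the paper in invoking $p$ odd to see that $K\mapsto K+K^t$ surjects onto symmetric matrices.
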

\begin{proof}
If $M \in \overline{\mathrm{O}}(S,\mathbb{Z}_{p^{a+1}})$, then 
$N = \overline{(M\ \mbox{mod}\ p^a)} \in \overline{\mathrm{O}}(S,\mathbb{Z}_{p^a})$.
We want to count 
$$M = N + p^a\alpha(U) + p^{a-1}\beta(U)$$
for $U$ mod $p$ and $N \in \overline{\mathrm{O}}(S,\mathbb{Z}_{p^a})$ such that 
$M \in \overline{\mathrm{O}}(S,\mathbb{Z}_{p^{a+1}})$. 
Observe that 
\begin{eqnarray*}
M^tSM-S & = & N^tSN - S \\
& + & p^a(\alpha(U)^tSN+ N^tS\alpha(U)) \\
& + & p^{a-1}(\beta(U)^tSN + N^tS\beta(U)) \\
& + & p^{2a-1}(\alpha(U)^tS\beta(U) + \beta(U)^tS\alpha(U)) \\
& + & p^{2a}(\alpha(U)^tS\alpha(U)) + p^{2a-2}(\beta(U)^tS\beta(U)) \\
& = & N^tSN - S \\
& + & p^a(\alpha(U)^tN+ N^t\alpha(U)) \\
& + & p^a(-d/p)(\beta(U)^tN + N^t\beta(U)). 
\end{eqnarray*}
Let $V = \alpha(U) + (-d/p)\beta(U)$.  
Then $\alpha(U) = \alpha(V)$ and $\beta(U) \equiv (-d/p)^{-1}\beta(V)\ \mathrm{mod}\ p$. 

Now $M^tSM = S$ if and only if 
$$V^tN + N^tV \equiv \frac{S-N^tSN}{p^a}\ \mathrm{mod}\ p$$
where we regard the entries of $N$ to be integers in the range $[0, p^a)$.  
Let $W = N^tV$ mod $p$.  Then $V = (N^t)^{-1}W$ mod $p$.  Note that $N^t$ is invertible mod $p$, 
since $N$ is invertible mod $p$ by Lemma 4. 
Now $M^tSM = S$ if and only if 
$$W^t + W \equiv \frac{S-N^tSN}{p^a}\ \mathrm{mod}\ p.$$
This congruence determines the diagonal of $W$ and below diagonal entries in terms 
of arbitrary above diagonal entries for $p^{n(n+1)/2}$ choices for $W$ with entries mod $p$. 
Thus we have $p^{n(n+1)/2}$ choices for $U$ and as many $M \in \overline{\mathrm{O}}(S,\mathbb{Z}_{p^{a+1}})$ 
for each $N\in \overline{\mathrm{O}}(S,\mathbb{Z}_{p^a})$. 
\end{proof}

\begin{lemma}  
We have that 
$|\overline{\mathrm{O}}(S,\mathbb{Z}_{p^2})| = 2p^{\frac{n(n+1)}{2}}|\mathrm{O}(n, \mathbb{Z}_p)|.$
\end{lemma}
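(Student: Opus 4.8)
The plan is to decompose each matrix by its last row and reduce the orthogonality relation first modulo $p$ and then modulo $p^2$. I would write an element of $\overline{\mathrm{O}}(S,\mathbb{Z}_{p^2})$ in block form as
$$\overline{M} = \begin{pmatrix} A & b \\ c^t & e \end{pmatrix},$$
where $A$ is $n \times n$ over $\mathbb{Z}_{p^2}$, $b$ is an $n \times 1$ column over $\mathbb{Z}_{p^2}$, and the last row $(c^t, e)$ has entries in $[0,p)$ by the definition of $\overline{M}$. Multiplying out $\overline{M}^t S \overline{M} = S$ with $S = \mathrm{diag}(I_n, -d)$ splits into the three block congruences $A^tA - d\,cc^t \equiv I_n$, $\ A^t b - d e\, c \equiv 0$, and $\ b^t b - d e^2 \equiv -d \pmod{p^2}$. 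Writing $d = p d'$ with $p \nmid d'$ (using that $d$ is square-free and $p \mid d$, so $p$ is odd), I first reduce these modulo $p$: since $d \equiv 0$, they become $A^tA \equiv I_n$, $A^t b \equiv 0$, and $b^t b \equiv 0 \pmod p$. The first shows $A$ is invertible mod $p$ and that its reduction lies in $\mathrm{O}(n,\mathbb{Z}_p)$, while the second then forces $b \equiv 0 \pmod p$.

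This suggests fibering $\overline{\mathrm{O}}(S,\mathbb{Z}_{p^2})$ over $\mathrm{O}(n,\mathbb{Z}_p)$ via $\overline{M} \mapsto (A \bmod p)$, and I would count each fibre. Fix $\bar A \in \mathrm{O}(n,\mathbb{Z}_p)$ and a lift $A_0$ with entries in $[0,p)$, and write $A = A_0 + pA_1$ and $b = p b_1$. The third block equation collapses, modulo $p^2$, to $d'(e^2 - 1) \equiv 0 \pmod p$, i.e.\ $e^2 \equiv 1 \pmod p$; since $p$ is odd this yields exactly the two solutions $e = 1$ and $e = p-1$. The entries of $c$ are then unconstrained, contributing $p^n$ choices. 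Given $e$ and $c$, the second block equation reduces modulo $p^2$ to $A_0^t b_1 \equiv d' e\, c \pmod p$, which determines $b_1$ uniquely because $A_0$ is invertible mod $p$, so $b$ is forced.

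The main step, and the one requiring the most care, is the lift count coming from the first block equation. Writing $A_0^t A_0 = I + pT$ with $T$ symmetric, the first equation becomes, modulo $p^2$, a congruence of the form $A_1^t A_0 + A_0^t A_1 \equiv d' c c^t - T \pmod p$. Setting $B = A_0^t A_1$, which is a bijective change of variable since $A_0$ is invertible mod $p$, turns this into $B + B^t \equiv R \pmod p$ for a fixed symmetric matrix $R$. Because $p$ is odd, the symmetric part of $B$ is pinned down while its antisymmetric part is free, so there are exactly $p^{n(n-1)/2}$ solutions $B$, independently of $c$ and of the chosen lift $A_0$. Each fibre therefore has $2 \cdot p^n \cdot p^{n(n-1)/2} = 2\,p^{n(n+1)/2}$ elements, and multiplying by $|\mathrm{O}(n,\mathbb{Z}_p)|$ gives the claim. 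The delicate points to verify are the independence of this solution count on $c$ (so that all fibres have equal size, forcing surjectivity onto $\mathrm{O}(n,\mathbb{Z}_p)$) and the careful bookkeeping of which cross terms survive modulo $p^2$ after the substitution $b \equiv 0$.
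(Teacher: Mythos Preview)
Your proof is correct and follows essentially the same approach as the paper: the paper uses the identical block decomposition (with notation $M_0,v,w,x$ in place of your $A,b,c,e$), derives the same three congruences, reduces modulo $p$ to get $M_0\bmod p\in\mathrm{O}(n,\mathbb{Z}_p)$ and $v\equiv 0$, and then counts lifts via the same linearization $V_0^t+V_0\equiv R$ to obtain $2p^{n(n-1)/2+n}$ elements over each $N_0\in\mathrm{O}(n,\mathbb{Z}_p)$. The only cosmetic difference is that you phrase the count of solutions to $B+B^t\equiv R$ via the symmetric/antisymmetric decomposition, whereas the paper fixes the above-diagonal entries freely and solves for the rest.
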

\begin{proof}  
Let $M \in \overline{\mathrm{O}}(S,\mathbb{Z}_{p^2})$ and decompose $M$ into blocks
$$M = \left(\begin{array}{l|l}
M_0 &  v \\ \hline 
w^t & x
\end{array}\right)$$
with $v \in \mathbb{Z}_{p^2}^n$, $w \in \mathbb{Z}_p^n$ and $x \in \mathbb{Z}_p$. 

Observe that $M^tSM = S$ if and only if 
$$\left\{\begin{array}{llr} 
M_0^tM_0 - dww^t \hspace{-.05in}& \equiv & I \ \, \mathrm{mod}\ p^2,  \\
M_0^tv - dxw & \equiv & 0 \ \, \mathrm{mod}\ p^2, \\
v^tv - dx^2  & \equiv &\hspace{-.05in} -d \ \, \mathrm{mod}\ p^2.
\end{array}\right.$$
The above system of congruences implies that 
$$\left\{\begin{array}{llr} 
M_0^tM_0 \hspace{-.05in}& \equiv & I \ \, \mathrm{mod}\ p,  \\
v & \equiv & 0 \ \, \mathrm{mod}\ p, \\
x^2  & \equiv &1 \ \, \mathrm{mod}\ p.
\end{array}\right.$$
Let $N_0 = (M_0\ \mathrm{mod} p)$.  
Then $M_0 = N_0 + pU_0$ for $U_0$ mod $p$ and $N_0 \in \mathrm{O}(n, \mathbb{Z}_p)$. 
For each such $N_0$ choose $x = \pm 1 \ \mathrm{mod}\ p$ and $w \in \mathbb{Z}_p^n$ arbitrarily. 
We need to count the $U_0$ and find $v$ so that $M \in \overline{\mathrm{O}}(S,\mathbb{Z}_{p^2})$. 
Note that $v = py$ where $M_0^ty \equiv (d/p)xw\ \mathrm{mod}\ p$, and so $v$ is determined from $x, w$ and $N_0$. 
Thus it suffices to count $U_0$ such that 
$$(N_0+pU_0)^t(N_0+p^bU_0)-dww^t\ \equiv\ I \ \mathrm{mod} \ p^2.$$
The above congruence is equivalent to 
$$U_0^tN_0 + N_0^tU_0\ \equiv\ \frac{I-N_0^tN_0}{p} + (d/p) ww^t \ \mathrm{mod}\ p.$$
Let $V_0 = N_0^tU_0$.  Then $U_0 = (N_0^t)^{-1}V_0$. 
The congruence
$$V_0^t + V_0\ \equiv\ \frac{I-N_0^tN_0}{p} + (d/p) ww^t \ \mathrm{mod}\ p$$
determines the diagonal of $V_0$ and below diagonal entries from the above diagonal entries. 
Hence there are $p^{n(n-1)/2}$ choices for $V_0$ and so for $U_0$ given $w$ and $x$. 
Thus there are $2p^{n(n-1)/2 + n}$ possibilities for $M \in \overline{\mathrm{O}}(S,\mathbb{Z}_{p^2})$ 
for each $N_0 \in \mathrm{O}(n, \mathbb{Z}_p)$. 
\end{proof}

\begin{theorem}  
The values of $E_{p^2}(S)$ and $E_{p^3}(S)$ are given by
\begin{eqnarray*}
E_{p^2}(S) & =  & 2p^{\frac{(n+1)(n+2)}{2}}|\mathrm{O}(n, \mathbb{Z}_p)|, \\
E_{p^3}(S) & = & p^{\frac{n(n+1)}{2}}E_{p^2}(S).
\end{eqnarray*}
\end{theorem}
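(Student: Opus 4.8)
The plan is to assemble the theorem directly from Lemmas 4, 5, and 6, since those lemmas already carry all of the combinatorial weight; what remains is only bookkeeping of exponents. Throughout, $p$ is the odd prime dividing $d$, so the hypotheses $a \geq 2$ of the three lemmas are met for the values $a = 2$ and $a = 3$ that I will use.

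First I would establish the value of $E_{p^2}(S)$. Applying Lemma 4 with $a = 2$ gives $E_{p^2}(S) = p^{n+1}|\overline{\mathrm{O}}(S,\mathbb{Z}_{p^2})|$. Substituting the value of $|\overline{\mathrm{O}}(S,\mathbb{Z}_{p^2})|$ supplied by Lemma 6 then yields
$$E_{p^2}(S) = p^{n+1}\cdot 2p^{\frac{n(n+1)}{2}}|\mathrm{O}(n,\mathbb{Z}_p)|.$$
The single computation needed is to merge the exponents: $n+1 + \frac{n(n+1)}{2} = (n+1)\bigl(1 + \tfrac{n}{2}\bigr) = \frac{(n+1)(n+2)}{2}$, which gives the first claimed formula.

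Next I would obtain the relation between $E_{p^3}(S)$ and $E_{p^2}(S)$. Applying Lemma 4 with $a = 3$ gives $E_{p^3}(S) = p^{n+1}|\overline{\mathrm{O}}(S,\mathbb{Z}_{p^3})|$, while Lemma 5 with $a = 2$ gives $|\overline{\mathrm{O}}(S,\mathbb{Z}_{p^3})| = p^{\frac{n(n+1)}{2}}|\overline{\mathrm{O}}(S,\mathbb{Z}_{p^2})|$. Combining these and then recognizing $p^{n+1}|\overline{\mathrm{O}}(S,\mathbb{Z}_{p^2})|$ as $E_{p^2}(S)$ via Lemma 4 at $a = 2$, I obtain
$$E_{p^3}(S) = p^{n+1}\cdot p^{\frac{n(n+1)}{2}}|\overline{\mathrm{O}}(S,\mathbb{Z}_{p^2})| = p^{\frac{n(n+1)}{2}}\,E_{p^2}(S),$$
which is the second formula.

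Since all three input lemmas are already in force, there is essentially no obstacle here: the proof is pure assembly and requires no new ideas. The only place demanding attention is the exponent arithmetic in the first formula, where the factor $p^{n+1}$ arising from Lemma 4 must be combined correctly with the $p^{n(n+1)/2}$ of Lemma 6 to recover the triangular-number exponent $\frac{(n+1)(n+2)}{2}$.
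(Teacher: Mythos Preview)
Your proposal is correct and follows essentially the same approach as the paper: both argue by combining Lemma 4 (at $a=2$) with Lemma 6 for the first formula, and Lemma 4 (at $a=3$) with Lemma 5 (at $a=2$) for the second, with the exponent arithmetic $n+1+\tfrac{n(n+1)}{2}=\tfrac{(n+1)(n+2)}{2}$ being the only computation involved. Your write-up is in fact slightly cleaner than the paper's, which contains a small typo (it writes $|\mathrm{O}(S,\mathbb{Z}_{p^2})|$ where $|\overline{\mathrm{O}}(S,\mathbb{Z}_{p^2})|$ is meant).
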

\begin{proof}
By Lemmas 4 and 6, we have that 
\begin{eqnarray*}
E_{p^2}(S) & =  & p^{n+1} |\overline{\mathrm{O}}(S,\mathbb{Z}_{p^2})| \\
& = & p^{n+1} 2p^{\frac{n(n+1)}{2}}|\mathrm{O}(n, \mathbb{Z}_p)| \ \ = \ \ 2p^{\frac{(n+1)(n+2)}{2}}|\mathrm{O}(n, \mathbb{Z}_p)|.
\end{eqnarray*}
By Lemmas 4 and 5, we have that 
\begin{eqnarray*}
E_{p^3}(S) & =  & p^{n+1} |\overline{\mathrm{O}}(S,\mathbb{Z}_{p^3})| \\
& = & p^{n+1} p^{\frac{n(n+1)}{2}}|\mathrm{O}(S, \mathbb{Z}_{p^2})| \ \ = \ \ p^{\frac{n(n+1)}{2}}E_{p^2}(S). 
\end{eqnarray*}

\vspace{-.2in}
\end{proof}

\begin{corollary} 
We have that 
$$\frac{2p^{\frac{3n(n+1)}{2}}}{E_{p^3}(S)} = \frac{2p^{\frac{2n(n+1)}{2}}}{E_{p^2}(S)} = 
\frac{p^{\frac{(n+1)(n-2)}{2}}}{|\mathrm{O}(n, \mathbb{Z}_p)|}.$$
\end{corollary}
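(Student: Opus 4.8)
The plan is to derive both equalities directly from Theorem 2, which supplies the two identities $E_{p^3}(S) = p^{\frac{n(n+1)}{2}}E_{p^2}(S)$ and $E_{p^2}(S) = 2p^{\frac{(n+1)(n+2)}{2}}|\mathrm{O}(n,\mathbb{Z}_p)|$. No further structural input is needed: the corollary is a bookkeeping consequence of these two formulas together with the laws of exponents, so the work amounts to two substitutions and one exponent simplification.

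First I would establish the left-hand equality. Substituting $E_{p^3}(S) = p^{\frac{n(n+1)}{2}}E_{p^2}(S)$ into the denominator and cancelling the common factor $p^{\frac{n(n+1)}{2}}$ against the numerator gives
$$\frac{2p^{\frac{3n(n+1)}{2}}}{E_{p^3}(S)} = \frac{2p^{\frac{3n(n+1)}{2}}}{p^{\frac{n(n+1)}{2}}E_{p^2}(S)} = \frac{2p^{\frac{3n(n+1)}{2}-\frac{n(n+1)}{2}}}{E_{p^2}(S)} = \frac{2p^{\frac{2n(n+1)}{2}}}{E_{p^2}(S)},$$
where the exponent is deliberately left in the unreduced form $\frac{2n(n+1)}{2}$ to match the middle expression displayed in the statement.

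Next I would handle the right-hand equality. Substituting $E_{p^2}(S) = 2p^{\frac{(n+1)(n+2)}{2}}|\mathrm{O}(n,\mathbb{Z}_p)|$, the factors of $2$ cancel and the powers of $p$ combine, leaving
$$\frac{2p^{\frac{2n(n+1)}{2}}}{E_{p^2}(S)} = \frac{p^{\,n(n+1)-\frac{(n+1)(n+2)}{2}}}{|\mathrm{O}(n,\mathbb{Z}_p)|}.$$
It then remains only to verify the single exponent identity $n(n+1)-\frac{(n+1)(n+2)}{2} = \frac{(n+1)(n-2)}{2}$, which follows by putting the two terms over a common denominator, factoring out $(n+1)$, and simplifying $2n-(n+2)=n-2$.

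The computation is entirely routine, so there is no genuine obstacle to overcome; the only point demanding care is the exponent arithmetic, and in particular resisting the temptation to reduce $\frac{2n(n+1)}{2}$ to $n(n+1)$ before the intermediate display, since the unreduced form is exactly what exhibits the two-step structure of the corollary.
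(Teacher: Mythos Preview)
Your proof is correct and matches the paper's approach: the corollary is stated without proof immediately after Theorem 2, and your derivation is exactly the routine substitution-and-cancellation the authors leave implicit. There is nothing to add.
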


We need the following classical theorem,  see Theorem 172 of Dickson \cite{D}. 

\begin{theorem} 
If $p$ be an odd prime number, then the order of $\mathrm{O}(n, \mathbb{Z}_p)$ is given by 
$$|\mathrm{O}(n, \mathbb{Z}_p)| = \left\{\begin{array}{ll} 2{\displaystyle\prod_{k=1}^{n-1}}p^k-\epsilon_2(k) & n\  \mathrm{odd,} \\
2p^{\frac{n}{2}-1}\left(p^{\frac{n}{2}} - \left(\frac{-1}{p}\right)^{\frac{n}{2}}\right){\displaystyle\prod_{k=1}^{n-2}}
p^k-\epsilon_2(k) & n\ \mathrm{even,}\end{array}\right.$$
where $(-1/p)$ is a Legendre symbol. 
\end{theorem}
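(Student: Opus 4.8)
The group $\mathrm{O}(n,\mathbb{Z}_p)$ is the isometry group of the nondegenerate quadratic form $Q(x)=x_1^2+\cdots+x_n^2$ on $V=\mathbb{Z}_p^{\,n}$, the form being nondegenerate because $p$ is odd. The plan is to prove this classical count by induction on $n$ using the orbit--stabilizer theorem. Fix a nonzero value $c$ represented by $Q$. By Witt's extension theorem, $\mathrm{O}(n,\mathbb{Z}_p)$ acts transitively on the level set $X_c=\{v\in V:Q(v)=c\}$, and the stabilizer of a point $v\in X_c$ is precisely the isometry group of the restriction of $Q$ to the hyperplane $v^{\perp}$, a nondegenerate form in $n-1$ variables. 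Orbit--stabilizer therefore gives
$$|\mathrm{O}(n,\mathbb{Z}_p)|=|X_c|\cdot|\mathrm{O}(Q|_{v^{\perp}},\mathbb{Z}_p)|,$$
so the computation reduces to counting $|X_c|$ and to controlling the isometry type of $Q|_{v^{\perp}}$ as the dimension drops. The base cases $n=1$ (where the group is $\{\pm 1\}$) and $n=2$ are immediate and agree with the claimed formula.

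For the count of $|X_c|$ I would use the standard quadratic character-sum (Gauss sum) method. Writing $\chi$ for the Legendre character of $\mathbb{Z}_p$ and $\Delta$ for the discriminant (the determinant of the Gram matrix) of $Q$, the evaluation of the relevant Gauss sum yields, for $c\neq 0$,
$$|X_c|=\begin{cases} p^{\,n-1}+p^{\,(n-1)/2}\,\chi\!\left((-1)^{(n-1)/2}c\,\Delta\right) & n\ \text{odd},\\ p^{\,n-1}-p^{\,(n-2)/2}\,\chi\!\left((-1)^{n/2}\Delta\right) & n\ \text{even}.\end{cases}$$
For the sum of squares $\Delta=1$, so in the even case $|X_c|$ is independent of the chosen $c$, while in the odd case it depends only on $\chi(c)$; transitivity on the fixed level set $X_c$ holds regardless.

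To run the induction I would track the discriminant through the orthogonal splitting $Q=Q|_{v^{\perp}}\perp\langle c\rangle$, which gives $\Delta=c\,\Delta'$ for the discriminant $\Delta'$ of $Q|_{v^{\perp}}$, so $\Delta'\equiv c\,\Delta$ modulo squares. In the reduction from odd $n=2m+1$ to even $2m$ the dependence on $c$ cancels: writing $\epsilon'=\chi((-1)^m c\Delta)$ for the type of $Q|_{v^{\perp}}$, the product $|X_c|\cdot|\mathrm{O}(Q|_{v^{\perp}},\mathbb{Z}_p)|$ contains the factor $(p^{2m}+\epsilon'p^m)(p^m-\epsilon')=p^m(p^{2m}-1)$, which is independent of the choice of $c$; telescoping yields $2\,p^{\,m^2}\prod_{k=1}^{m}(p^{2k}-1)$ and proves the odd case. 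To express $|\mathrm{O}(2m,\mathbb{Z}_p)|$ in terms of the already-known odd order, the type of the even form is forced by $\Delta=1$ to be $\epsilon=\chi((-1)^m)=\left(\frac{-1}{p}\right)^{m}$, and a short computation gives $2\,p^{\,m(m-1)}\bigl(p^{\,m}-\epsilon\bigr)\prod_{k=1}^{m-1}(p^{2k}-1)$; rewriting each product over $k$ with $\epsilon_2(k)$ reproduces the even case. This $\epsilon$ is precisely the origin of the factor $\left(\frac{-1}{p}\right)^{n/2}$.

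The main obstacle is the exact sign in the Gauss-sum evaluation of $|X_c|$ together with the bookkeeping of the discriminant and isometry type through the induction: a sign error in either place propagates directly into the quadratic-character factor and corrupts the $\left(\frac{-1}{p}\right)^{n/2}$ term. Everything else is routine telescoping of the resulting products. Since the statement is exactly Theorem 172 of Dickson \cite{D}, one may alternatively simply invoke that reference.
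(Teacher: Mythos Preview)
Your proposal is correct, and in fact goes well beyond what the paper does: the paper offers no proof whatsoever of this theorem, merely citing it as ``the following classical theorem, see Theorem 172 of Dickson \cite{D}.'' So there is nothing to compare against on the paper's side.

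Your orbit--stabilizer/Witt argument is the standard modern route and the sketch is sound. The one place to be careful is exactly where you flag it: the sign in the Gauss-sum evaluation and the discriminant bookkeeping under $Q=Q|_{v^{\perp}}\perp\langle c\rangle$. Your telescoping identity $(p^{2m}+\epsilon' p^m)(p^m-\epsilon')=p^m(p^{2m}-1)$ is the key cancellation in the odd case, and your identification $\epsilon=\chi((-1)^m)=\left(\frac{-1}{p}\right)^{n/2}$ for the even case correctly pins down the quadratic-character factor. Rewriting $p^{m^2}\prod_{k=1}^m(p^{2k}-1)$ and $p^{m(m-1)}(p^m-\epsilon)\prod_{k=1}^{m-1}(p^{2k}-1)$ via $\epsilon_2(k)$ reproduces the stated formulas verbatim. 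Since the paper is content to cite Dickson, your final sentence already matches the paper's ``proof'' exactly; the preceding argument is a bonus.
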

\begin{corollary} 
We have that
$$\frac{2p^{\frac{3n(n+1)}{2}}}{E_{p^3}(S)} = {\displaystyle \frac{p^{\frac{n}{2}}+\epsilon_2(n)\left(\frac{-1}{p}\right)^{\frac{n}{2}}}{\displaystyle 2p^{\frac{n+2}{2}}\prod_{k=1}^{[\frac{n}{2}]}(1-p^{-2k})}}.$$
\end{corollary}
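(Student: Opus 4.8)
The plan is to substitute the order formula of Theorem 3 into the identity
$\frac{2p^{3n(n+1)/2}}{E_{p^3}(S)} = \frac{p^{(n+1)(n-2)/2}}{|\mathrm{O}(n,\mathbb{Z}_p)|}$ of Corollary 2, and then simplify, treating the cases $n$ odd and $n$ even separately. In both cases the products $\prod_{k=1}^{n-1}(p^k-\epsilon_2(k))$ and $\prod_{k=1}^{n-2}(p^k-\epsilon_2(k))$ appearing in Theorem 3 can be reorganized exactly as in the proof of Corollary 1: one separates the odd-index factors $p^k$ from the even-index factors $p^{2j}-1 = p^{2j}(1-p^{-2j})$ and collects the resulting powers of $p$.

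For $n$ odd, this reorganization gives $\prod_{k=1}^{n-1}(p^k-\epsilon_2(k)) = p^{n(n-1)/2}\prod_{k=1}^{(n-1)/2}(1-p^{-2k})$, so that $|\mathrm{O}(n,\mathbb{Z}_p)| = 2p^{n(n-1)/2}\prod_{k=1}^{(n-1)/2}(1-p^{-2k})$. Dividing $p^{(n+1)(n-2)/2}$ by this, the exponent of $p$ reduces to $-1$, and since $\epsilon_2(n)=0$ and $[n/2]=(n-1)/2$, the right-hand side of the stated formula collapses to the same expression $\bigl(2p\prod_{k=1}^{(n-1)/2}(1-p^{-2k})\bigr)^{-1}$; matching the two is then a direct check of exponents.

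For $n$ even the same reorganization yields $\prod_{k=1}^{n-2}(p^k-\epsilon_2(k)) = p^{(n-2)(n-1)/2}\prod_{k=1}^{(n-2)/2}(1-p^{-2k})$, and after collecting powers of $p$ the quotient becomes $\frac{p^{(n-2)/2}}{2\,\bigl(p^{n/2}-(\frac{-1}{p})^{n/2}\bigr)\prod_{k=1}^{(n-2)/2}(1-p^{-2k})}$. Here lies the main obstacle: the stated formula instead carries the factor $p^{n/2}+(\frac{-1}{p})^{n/2}$ in the numerator and the full product $\prod_{k=1}^{n/2}(1-p^{-2k})$ in the denominator. The key step is to set $\chi = (\frac{-1}{p})^{n/2}$, observe that $\chi^2=1$, and use the factorization $1-p^{-n} = (1-\chi p^{-n/2})(1+\chi p^{-n/2})$ together with $p^{n/2}+\chi = p^{n/2}(1+\chi p^{-n/2})$ and $p^{n/2}-\chi = p^{n/2}(1-\chi p^{-n/2})$. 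The extra factor $1-p^{-n}$ (the $k=n/2$ term missing from $\prod_{k=1}^{(n-2)/2}$) supplies exactly the $(1+\chi p^{-n/2})$ needed to cancel against the numerator $p^{n/2}+\chi$, after which the even case reduces to the same shape as the odd case. This cancellation, grounded in $\chi^2=1$, is what unifies the two parities into the single expression with the factor $\epsilon_2(n)(\frac{-1}{p})^{n/2}$, and the remainder is routine bookkeeping of powers of $p$.
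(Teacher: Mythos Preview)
Your proposal is correct and follows essentially the same route as the paper: both arguments substitute Theorem~3 into Corollary~2, reorganize the products $\prod(p^k-\epsilon_2(k))$ by separating odd from even indices, and in the even case exploit $\chi^2=1$ to relate $p^{n/2}\pm\chi$ to $p^n-1$. The only cosmetic difference is direction: the paper multiplies numerator and denominator by $p^{n/2}+\chi$ to produce $p^n-1$ and then absorbs it (together with $p^{n-1}$) into $\prod_{k=1}^{n}(p^k-\epsilon_2(k))$, whereas you split off the $k=n/2$ factor $1-p^{-n}$ from the target product and factor it as $(1-\chi p^{-n/2})(1+\chi p^{-n/2})$ to cancel.
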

\begin{proof}
First assume that $n$ is odd.  By Theorem 3 and Corollary 2,  we have that 
\begin{eqnarray*}
\frac{2 p^{\frac{3n(n+1)}{2}}}{E_{p^3}(S)} 
& = &\frac{p^{\frac{(n+1)(n-2)}{2}}}
{2{\displaystyle\prod_{k=1}^{n-1}p^k-\epsilon_2(k)}} \\
& = & \frac{p^{\frac{(n+1)(n-2)}{2}}}{
{\displaystyle\, 2\prod_{k=1}^{\frac{n-1}{2}}p^{2k-1}(p^{2k}-1)}} \\
& = & \frac{p^{\frac{(n+1)(n-2)}{2}}}{
2{\displaystyle \prod_{k=1}^{\frac{n-1}{2}}p^{4k-1}(1-p^{-2k})}} \\
& = & \frac{p^{\frac{(n+1)(n-2)}{2}}}{
2p^{\frac{(n-1)n}{2}}{\displaystyle \prod_{k=1}^{\frac{n-1}{2}}(1-p^{-2k})}} \ \
 = \ \ \frac{1}{2p{\displaystyle \prod_{k=1}^{\frac{n-1}{2}}(1-p^{-2k})}}.
\end{eqnarray*}
Now assume that $n$ is even.  By Theorem 3 and Corollary 2,  we have that 
\begin{eqnarray*}
\frac{2 p^{\frac{3n(n+1)}{2}}}{E_{p^3}(S)} 
& = &\frac{p^{\frac{(n+1)(n-2)}{2}}}
{2p^{\frac{n}{2}-1}\left(p^{\frac{n}{2}} - \left(\frac{-1}{p}\right)^{\frac{n}{2}}\right){\displaystyle\prod_{k=1}^{n-2}}p^k-\epsilon_2(k)} \\
& = & \frac{p^{\frac{(n+1)(n-2)}{2}}\left(p^{\frac{n}{2}}+\left(\frac{-1}{p}\right)^{\frac{n}{2}}\right)}
{2p^{\frac{n}{2}-1}(p^n - 1){\displaystyle\prod_{k=1}^{n-2}}p^k-\epsilon_2(k)} \\
& = & \frac{p^{\frac{(n+1)(n-2)}{2}}\left(p^{\frac{n}{2}}+\left(\frac{-1}{p}\right)^{\frac{n}{2}}\right)}
{2p^{-\frac{n}{2}}{\displaystyle\prod_{k=1}^{n}}p^k-\epsilon_2(k)} \\
& = & \frac{p^{\frac{(n+1)(n-2)}{2}}\left(p^{\frac{n}{2}}+\left(\frac{-1}{p}\right)^{\frac{n}{2}}\right)}{
{\displaystyle 2p^{-\frac{n}{2}}p^{\frac{n(n+1)}{2}}\prod_{k=1}^{\frac{n}{2}}(1-p^{-2k})}} \ \ 
= \ \ \frac{p^{\frac{n}{2}}+\left(\frac{-1}{p}\right)^{\frac{n}{2}}}{
{\displaystyle 2p^{\frac{n+2}{2}}\prod_{k=1}^{\frac{n}{2}}(1-p^{-2k})}}.
\end{eqnarray*}

\vspace{-.3in}
\end{proof}

\section{The covolume of the group of units of $f^n_d$} 

Let $\Gamma_d^n$ be the discrete group of isometries of hyperbolic $n$-space $H^n$ 
that corresponds to the group of positive units of the quadratic form $f_d^n$.  
In this section, we give an explicit formula for $\mathrm{vol}(H^n/\Gamma_d^n)$. 

From Formula (6) of \cite{RT97}, we have 
\begin{equation}
\prod^n_{k=1}\pi^{-\frac{k}{2}}\Gamma({\textstyle{\frac{k}{2}}}) = 
\left\{\begin{array}{ll} \displaystyle
\prod^{\frac{n-1}{2}}_{k=1}\frac{2(2k-1)!}{(2\pi)^{2k}}  & n \  \mbox{odd}, \\ \displaystyle
\prod^{\frac{n}{2}}_{k=1}\frac{2(2k-1)!}{(2\pi)^{2k}}\cdot \frac{(2\pi)^{\frac{n}{2}}}{(n-1)!!}  &  n \  \mbox{even}.
\end{array}\right.
\end{equation}
By Theorems 12.17  and 12.18 of \cite{A}, we have that 
\begin{equation}
\zeta(2k) = \frac{(2\pi)^{2k}|B_{2k}|}{2(2k)!}
\end{equation}
for every positive integer $k$ where $B_{2k}$ is the $(2k)$th Bernoulli number. 

Define a function $B$ of $n$ by the formula
\begin{equation}
B = \prod^{[\frac{n}{2}]}_{k=1}\frac{|B_{2k}|}{2k}.
\end{equation}
Then we have that 
\begin{equation}
\prod^n_{k=1}\pi^{-\frac{k}{2}}\Gamma({\textstyle{\frac{k}{2}}}) \prod^{[\frac{n}{2}]}_{k=1}\zeta(2k) =
\left\{\begin{array}{ll} 
B & n\  \mbox{odd}, \\ \displaystyle
B\cdot \frac{(2\pi)^{\frac{n}{2}}}{(n-1)!!}  &  n \  \mbox{even}. 
\end{array}\right.
\end{equation}

Define a function $C$ of $n$ and $d$ by the formula
\begin{equation}
C = \cos((n+(-1)^{\frac{d+1}{2}})\pi/4).
\end{equation}
\begin{theorem} 
Let $d$ be an odd,  square-free, positive integer, and let $\Gamma_d^n$ be the discrete group 
of isometries of hyperbolic $n$-space $H^n$ corresponding to the group of positive units of 
the quadratic form $f_d^n$.  The volume of $H^n/\Gamma_d^n$ is given by 
$$\mathrm{vol}(H^n/\Gamma_d^n) = 
\left\{\begin{array}{ll} 
{\displaystyle\frac{d^{\frac{n-1}{2}}B}{2^{n+\omega(d)}}}\big(2^{\frac{n-1}{2}}+C\big)\big(2^{\frac{n+1}{2}}-\left(\frac{D}{2}\right)\big)
\sqrt{d}\cdot L\big(\textstyle\frac{n+1}{2},D\big) & n \ \mbox{odd}, \vspace{.05in} \\
{\displaystyle\frac{B}{2^{\frac{n}{2}+\omega(d)}}}\big(2^{\frac{n}{2}}+2^{\frac{1}{2}}C\big)
{\displaystyle\prod_{p\,|\,d}}\Big(p^{\frac{n}{2}}+\left(\frac{-1}{p}\right)^{\frac{n}{2}}\Big)
\cdot {\displaystyle\frac{(2\pi)^{\frac{n}{2}}}{(n-1)!!}} & n \ \mbox{even}.
\end{array}\right.$$
\end{theorem}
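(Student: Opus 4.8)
The plan is to substitute the evaluations of all the local factors into Siegel's formula (1), using the factorization (4) of the limit into an Euler product over the primes $p\mid 2d$ and the primes $p\nmid 2d$. Since $d$ is odd, the primes dividing $2d$ are exactly $2$ together with the primes dividing $d$, so I would split the finite product accordingly: for the factor at $p=2$ I would insert Corollary 1, for each odd prime $p\mid d$ I would insert Corollary 4 (noting that for $n$ odd the term $\epsilon_2(n)\left(\frac{-1}{p}\right)^{n/2}$ vanishes, which is already the simplification carried out inside the proof of Corollary 4), and for the primes $p\nmid 2d$ I would insert the product formula (7) when $n$ is even and (15) when $n$ is odd.

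The observation that makes everything collapse is a cancellation. The $p\nmid 2d$ product carries a factor $\prod_{k}\prod_{p\mid 2d}(1-p^{-2k})$, and each of these finitely many $(1-p^{-2k})$ factors exactly cancels the denominator $\prod_{k}(1-p^{-2k})^{-1}$ appearing in the corresponding local factor at that $p$ (Corollary 1 at $p=2$, Corollary 4 at $p\mid d$). After this cancellation no $(1-p^{-2k})$ term survives, and what remains is: the product $\prod_{k=1}^{[n/2]}\zeta(2k)$, the value $L\big(\frac{n+1}{2},D\big)$ together with $1-\left(\frac{D}{2}\right)2^{-(n+1)/2}$ (present only for $n$ odd), the cosine term $C$ from $p=2$, and the arithmetic factors $p^{n/2}+\left(\frac{-1}{p}\right)^{n/2}$ from the primes $p\mid d$ (which collapse to the $\frac{1}{2p}$ shape when $n$ is odd).

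Next I would feed the surviving $\prod_{k=1}^{[n/2]}\zeta(2k)$ into equation (22), which converts $\prod_{k=1}^n\pi^{-k/2}\Gamma(k/2)\cdot\prod_{k=1}^{[n/2]}\zeta(2k)$ into $B$ for $n$ odd and into $B\cdot\frac{(2\pi)^{n/2}}{(n-1)!!}$ for $n$ even, disposing of the gamma and zeta factors in one stroke. The rest is arithmetic bookkeeping of powers of $2$ and $d$. Using square-freeness of $d$ to write $\prod_{p\mid d}p=d$, the product of the $\frac{1}{2p}$ factors (odd case) becomes $\frac{1}{2^{\omega(d)}d}$, while the product of the $2p^{(n+2)/2}$ denominators (even case) becomes $2^{\omega(d)}d^{(n+2)/2}$; in the even case this $d^{(n+2)/2}$ cancels against the $d^{(n+2)/2}$ in Siegel's prefactor, and in the odd case the leftover power of $d$ regroups as $d^{(n-1)/2}\sqrt{d}$.

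Finally I would collect the powers of $2$. In the odd case, combining the $4$ from (1), the $2^{(n+3)/2}$ from Corollary 1, the $2^{(n+1)/2}$ produced by rewriting $1-\left(\frac{D}{2}\right)2^{-(n+1)/2}$ as $\frac{2^{(n+1)/2}-(D/2)}{2^{(n+1)/2}}$, and the $2^{\omega(d)}$ gives precisely the denominator $2^{n+\omega(d)}$; in the even case the ratio $\frac{4(2^{(n-1)/2}+C)}{2^{(n+3)/2}}$ simplifies to $1+2^{(1-n)/2}C=\frac{2^{n/2}+2^{1/2}C}{2^{n/2}}$, yielding the denominator $2^{n/2+\omega(d)}$. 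I do not anticipate any genuine obstacle: the argument is pure substitution and simplification, and the only thing demanding care is tracking the half-integer exponents of $2$ and verifying that the $(1-p^{-2k})$ cancellation and the $\zeta$-versus-$\Gamma$ cancellation through (22) leave exactly the factors claimed.
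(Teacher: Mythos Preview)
Your proposal is correct and follows exactly the route the paper takes: plug Corollaries~1 and~3 (the $p\mid 2d$ factors) and formulas~(7)/(14) (the $p\nmid 2d$ product) into Siegel's formula via (4), let the $(1-p^{-2k})$ factors cancel, collapse the $\Gamma$--$\zeta$ block through (24), and then do the bookkeeping of the powers of $2$ and $d$ just as you describe. Your labels are off by one in a few places---what you call Corollary~4, formula~(15), and equation~(22) are Corollary~3, formula~(14), and equation~(24) in the paper---but the mathematics is identical.
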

\begin{proof}
First assume that $n$ is odd. From Formulas (1), (4), (14), (24) and Corollaries 1 and 3, we have that 
\begin{eqnarray*}
\mathrm{vol}(H^n/\Gamma_d^n) \hspace{-.08in} 
&\! \! \! = \! \! \! & 
4d^{\frac{n+2}{2}}
\prod^n_{k=1}\pi^{-\frac{k}{2}}\Gamma({\textstyle{\frac{k}{2}}})\,\cdot\,
\lim_{q\to\infty}{2^{\omega(q)}\frac{q^{\frac{n(n+1)}{2}}}{E_q(S)}} \\
&\! \! \! =\! \! \! & 
4d^{\frac{n+2}{2}}
\prod^n_{k=1}\pi^{-\frac{k}{2}}\Gamma({\textstyle{\frac{k}{2}}})\,\cdot\,
\prod_{p\, |\, 2d}\frac{2p^{\frac{3n(n+1)}{2}}}{E_{p^3}(S)}\,\cdot\,
\prod_{p\, \nmid\, 2d} \frac{2p^{\frac{n(n+1)}{2}}}{E_p(S)} \\
&\! \! \! = \! \! \! & 
4d^{\frac{n+2}{2}}B \!
\prod_{p | 2d}\frac{2p^{\frac{3n(n+1)}{2}}}{E_{p^3}(S)}
{\textstyle\big(1-\left(\frac{D}{2}\right)}2^{-(\frac{n+1}{2})}\big)L({\textstyle\frac{n+1}{2}},D) \! \!
\prod_{k=1}^{\frac{n-1}{2}}\prod_{p|2d}(1-p^{-2k}) \\
&\! \! \! = \! \! \! & 
4d^{\frac{n+2}{2}}B
\frac{\big(2^{\frac{n-1}{2}}+C\big)}{2^{\frac{n+3}{2}}2^{\omega(d)}d}
{\textstyle\big(1-\left(\frac{D}{2}\right)}2^{-(\frac{n+1}{2})}\big)L({\textstyle\frac{n+1}{2}},D) \\
&\! \! \! = \! \! \! & 
{\displaystyle\frac{d^{\frac{n-1}{2}}B}{2^{n+\omega(d)}}}\big(2^{\frac{n-1}{2}}+C\big)
{\textstyle\big(2^{\frac{n+1}{2}}-\left(\frac{D}{2}\right)\big)}
\sqrt{d}\cdot L\big(\textstyle\frac{n+1}{2},D\big).
\end{eqnarray*}
Now assume that $n$ is even.  From Formulas (1), (4), (7), (24) and Corollaries 1 and 3, we have that
\begin{eqnarray*}
\mathrm{vol}(H^n/\Gamma_d^n) \hspace{-.08in} 
&\! \! \! =\! \! \! & 
4d^{\frac{n+2}{2}}
\prod^n_{k=1}\pi^{-\frac{k}{2}}\Gamma({\textstyle{\frac{k}{2}}})\,\cdot\,
\prod_{p\, |\, 2d}\frac{2p^{\frac{3n(n+1)}{2}}}{E_{p^3}(S)}\,\cdot\,
\prod_{p\, \nmid\, 2d} \frac{2p^{\frac{n(n+1)}{2}}}{E_p(S)} \\
&\! \! \! = \! \! \! & 
4d^{\frac{n+2}{2}}B \cdot \frac{(2\pi)^{\frac{n}{2}}}{(n-1)!!} 
\prod_{p\, |\, 2d}\frac{2p^{\frac{3n(n+1)}{2}}}{E_{p^3}(S)} \,\cdot\,
\prod_{k=1}^{\frac{n}{2}}\prod_{p|2d}(1-p^{-2k}) \\
&\! \! \! = \! \! \! & 
4d^{\frac{n+2}{2}}B \cdot \frac{(2\pi)^{\frac{n}{2}}}{(n-1)!!} 
\frac{\big(2^{\frac{n-1}{2}}+C\big)}{2^{\frac{n+3}{2}}}
\prod_{p\, |\, d}\frac{\Big(p^{\frac{n}{2}}+\left(\frac{-1}{p}\right)^{\frac{n}{2}}\Big)}{2p^{\frac{n+2}{2}}} \\
&\! \! \! = \! \! \! & 
{\displaystyle\frac{B}{2^{\frac{n}{2}+\omega(d)}}}\big(2^{\frac{n}{2}}+2^{\frac{1}{2}}C\big)
{\displaystyle\prod_{p\,|\,d}}\Big(p^{\frac{n}{2}}+{\textstyle\left(\frac{-1}{p}\right)^{\frac{n}{2}}}\Big)
\cdot {\displaystyle\frac{(2\pi)^{\frac{n}{2}}}{(n-1)!!}}.
\end{eqnarray*}

\vspace{-.2in}
\end{proof}

\begin{corollary} 
If $n$ is odd, then the Euler characteristic of $\Gamma_d^n$ is zero. 
If $n$ is even, then  the Euler characteristic of $\Gamma_d^n$ is given by
$$\chi(\Gamma_d^n) = {\displaystyle\frac{(-1)^{\frac{n}{2}}B}{2^{\frac{n}{2}+\omega(d)}}}\big(2^{\frac{n}{2}}+2^{\frac{1}{2}}C\big)
{\displaystyle\prod_{p\,|\,d}}\Big(p^{\frac{n}{2}}+{\textstyle\left(\frac{-1}{p}\right)^{\frac{n}{2}}}\Big).$$
\end{corollary}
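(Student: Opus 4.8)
The plan is to read off the Euler characteristic from the covolume computed in Theorem 4 by invoking the generalized Gauss--Bonnet theorem for complete, finite-volume hyperbolic orbifolds. Here $\chi(\Gamma_d^n)$ is to be understood as the orbifold (rational) Euler characteristic: by Selberg's lemma $\Gamma_d^n$ contains a torsion-free subgroup $\Gamma_0$ of finite index, the quotient $H^n/\Gamma_0$ is a $K(\Gamma_0,1)$ because $H^n$ is contractible, and one sets $\chi(\Gamma_d^n) = \chi(H^n/\Gamma_0)/[\Gamma_d^n:\Gamma_0]$, a quantity independent of the choice of $\Gamma_0$. Since the groups $\Gamma_d^n$ are noncompact, I would cite Harder's extension of Gauss--Bonnet to the finite-volume (cusped) setting so that the theorem applies to $H^n/\Gamma_0$.

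For $n$ odd I would argue that this quantity vanishes. The Gauss--Bonnet--Chern integrand (the Euler form of a flat-model curvature) is identically zero in odd dimensions; equivalently, a closed odd-dimensional manifold has zero Euler characteristic by Poincar\'e duality, and the finite-volume case reduces to this. Hence $\chi(H^n/\Gamma_0) = 0$ and therefore $\chi(\Gamma_d^n) = 0$, giving the first assertion immediately.

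For $n$ even I would use the precise normalization of the hyperbolic Gauss--Bonnet theorem, namely
\[
\chi(M) = (-1)^{n/2}\frac{(n-1)!!}{(2\pi)^{n/2}}\,\mathrm{vol}(M)
\]
for a complete finite-volume hyperbolic $n$-manifold $M$. This follows from the curvature form identity $\chi(M) = \frac{(-1)^{n/2}2}{\mathrm{vol}(S^n)}\mathrm{vol}(M)$ together with the sphere volume $\mathrm{vol}(S^n) = 2^{n/2+1}\pi^{n/2}/(n-1)!!$; I would sanity-check the constant on the surface case $n=2$, where it collapses to the familiar $\chi = -\mathrm{vol}(M)/2\pi$, and on $n=4$, where it gives $\chi = 3\,\mathrm{vol}(M)/4\pi^2$. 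Applying the relation to $\Gamma_0$ and dividing by the index yields the same identity with $M$ replaced by the orbifold $H^n/\Gamma_d^n$ and $\mathrm{vol}(M)$ replaced by $\mathrm{vol}(H^n/\Gamma_d^n)$.

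Finally I would substitute the even-$n$ covolume formula from Theorem 4 into this relation. The factor $(2\pi)^{n/2}/(n-1)!!$ appearing in the volume cancels exactly against the Gauss--Bonnet prefactor $(n-1)!!/(2\pi)^{n/2}$, and the surviving terms reproduce the stated expression for $\chi(\Gamma_d^n)$ verbatim. The only real point requiring care is pinning down the normalizing constant in the Gauss--Bonnet theorem (the sphere-volume factor) and confirming it is compatible with the conventions used to derive Theorem 4; once that constant is fixed, the computation is a one-line cancellation.
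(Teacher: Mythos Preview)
Your proposal is correct and follows essentially the same route as the paper: invoke Selberg's lemma to pass to a torsion-free finite-index subgroup, use the vanishing of the Euler characteristic for odd-dimensional finite-volume hyperbolic manifolds, and in even dimensions apply the Gauss--Bonnet relation $\mathrm{vol}(H^n/\Gamma_d^n) = (-1)^{n/2}\chi(\Gamma_d^n)\,(2\pi)^{n/2}/(n-1)!!$ together with Theorem~4. The paper's proof is terser but structurally identical; your additional remarks about Harder's extension and the sanity checks on the normalizing constant are appropriate elaborations rather than a different argument.
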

\begin{proof}
The group $\Gamma_d^n$ is finitely generated, and so $\Gamma_d^n$ has a torsion-free subgroup of finite index 
by Selberg's Lemma. 
If $n$ is odd, then $\chi(\Gamma_d^n) = 0$, since odd dimensional hyperbolic manifolds 
of finite volume have zero Euler characteristic. 
If $n$ is even, then the Gauss-Bonnet Theorem implies that 
\begin{equation}
\mathrm{vol}(H^n/\Gamma_d^n) = (-1)^{\frac{n}{2}}\chi(\Gamma_d^n)\cdot {\displaystyle\frac{(2\pi)^{\frac{n}{2}}}{(n-1)!!}}.
\end{equation}
The result now follows immediately from Theorem 4.
\end{proof}

Using the formulas in Theorem 4,  we can easily compute 
the volume of $H^n/\Gamma_d^n$.  
For example, we have 
\begin{eqnarray*}
\mathrm{vol}(H^3/\Gamma^3_3) & = & \frac{5\sqrt{3}}{64}L(2,-3), \\
\mathrm{vol}(H^3/\Gamma^3_7) & = & \frac{7\sqrt{7}}{64}L(2,-7), \\
\mathrm{vol}(H^5/\Gamma^5_5) & = & \frac{15\sqrt{5}}{2\,048}L(3,5), \\
\mathrm{vol}(H^7/\Gamma^7_7) & = & \frac{49\sqrt{7}}{98\, 304}L(4,-7).  
\end{eqnarray*}
These four volumes were reported on pages 344 and 345 of \cite{J-T}, with some sign differences 
due to different definitions of $\Gamma_d^n$ and $L(s,D)$.  
In \cite{J-T}, the group $\Gamma_d^n$ was denoted by $\Gamma_{-d}^n$, 
and $L$-functions were defined via Legendre symbols rather than Kronecker symbols.

\section{The volumes of Mcleod's polytopes} 

J. Mcleod \cite{M} has determined the reflection subgroup of $\Gamma_3^n$ 
and has shown that it has finite index if and only if $n \leq 13$. 
Let $P^n$ be the hyperbolic  Coxeter fundamental domain of the reflection subgroup of $\Gamma_3^n$ 
determined by Mcleod in \cite{M}.  
Mcleod showed that the reflection subgroup of $\Gamma_3^n$ 
has index equal to the order of the symmetry group $\mathrm{Sym}(P^n)$ of $P^n$; 
moreover, Mcleod determined $\mathrm{Sym}(P)$ for $n \leq 13$. 
Hence, for $n \leq 13$, we have that 
\begin{equation}
\mathrm{vol}(P^n) = |\mathrm{Sym}(P^n)| \mathrm{vol}(H^n/\Gamma^n_3).
\end{equation}
Table 1 lists the volumes of $H^n/\Gamma^n_3$ and $P^n$ for $n \leq 13$ computed using our formulas.

\medskip
\begin{table}  
$\begin{array}{rlcll}
n  & \mathrm{vol}(\Gamma^n_3) & |\mathrm{Sym}(P^n)| & \mathrm{vol}(P^n) & |\mathrm{vol}(P^n)| \vspace{.02in}  \\ 
\hline
\vspace{-.1in} \\
2 & \frac{\pi}{12} 		       		 & 1            & \frac{\pi}{12}  &  2.617993878*10^{-1}  \vspace{.05in} \\ 
3 & \frac{5\sqrt{3}}{64}L(2,-3)  		 & 1            & \frac{5\sqrt{3}}{64}L(2,-3) & 1.057230840*10^{-1} \vspace{.05in} \\
4 &  \frac{\pi^2}{288} 	       		 & 1            & \frac{\pi^2}{288}  & 3.426945973*10^{-2} \vspace{.05in} \\
5 & \frac{\sqrt{3}}{320}L(3,12) 		 & 1 & \frac{\sqrt{3}}{320}L(3,12) & 5.358748797*10^{-3} \vspace{.05in} \\
6 &  \frac{13\pi^3}{604\, 800}     		 & 1            & \frac{13\pi^3}{604\, 800} & 6.664708943*10^{-4} \vspace{.05in} \\
7 & \frac{51\sqrt{3}}{1\, 146\, 880}L(4,-3)  & 1 & \frac{51\sqrt{3}}{1\, 146\, 880}L(4,-3) & 7.240232999*10^{-5} \vspace{.05in} \\
8 &  \frac{697\pi^4}{9\, 144\, 576\, 000}       & 1            & \frac{697\pi^4}{9\, 144\, 576\, 000} & 7.424525364*10^{-6}  \vspace{.05in} \\
9 & \frac{L(5,12)}{716\, 800\sqrt{3}}       & 1            & \frac{L(5,12)}{716\, 800\sqrt{3}} & 8.051559421*10^{-7} \vspace{.05in} \\
10 & \frac{341\pi^5}{987\, 614\, 208\, 000}  & 2    & \frac{341\pi^4}{493\, 807\, 104\, 000} & 2.113228256*10^{-7} \vspace{.05in} \\
11 & \frac{403 L(6,-3)}{12\, 918\, 456\, 320\sqrt{3}}  & 2 & \frac{403 L(6,-3)}{6\, 459\, 228\, 160\sqrt{3}}  & 3.546550442*10^{-8} \vspace{.05in} \\
12 & \frac{50\, 443\pi^6}{12\, 428\, 137\, 193\, 472\, 000}  & 2  & \frac{50\, 443\pi^6}{6\, 214\, 068\, 596\, 736\, 000} & 7.804122909*10^{-9} \vspace{.05in} \\
13 & \frac{691 L(7,12)}{344\, 408\, 064\, 000\sqrt{3}}   & 4   & \frac{691 L(7,12)}{86\, 102\, 016\, 000\sqrt{3}} & 4.633381297*10^{-9} \vspace{.05in} 
\end{array}$

\medskip
\caption{The volumes of Mcleod's hyperbolic Coxeter polytopes}
\end{table}

The polygon $P^2$ is $30^\circ - 45^\circ$ right triangle, and so we can compute the area of $P^2$ 
by the classical angle defect formula 
$$\mathrm{vol}(P^2) = \pi - \left(\frac{\pi}{2} - \frac{\pi}{4} -\frac{ \pi}{6}\right) = \frac{\pi}{12}.$$

The polyhedron $P^3$ is an orthotetrahedron with angles $\frac{\pi}{6}, \frac{\pi}{3},\frac{\pi}{4}$. 
By Theorem 10.4.5 and the duplication formulas 10.4.9 and 10.4.10 in \cite{R}, we have that 
$$\mathrm{vol}(P^3) = \frac{5}{16}\russianL\Big(\frac{\pi}{3}\Big)$$
where $\russianL(\theta)$  is the Lobachevsky function.  
By Formula 2 in Milnor \cite{Milnor}, we have that
\begin{equation}
\russianL(\theta) = \frac{1}{2}\sum_{k=1}^\infty\sin(2k\theta)k^{-2}.
\end{equation}
Observe that 
\begin{equation}
\sin(2k\pi/3) = \frac{\sqrt{3}}{2}\left(\frac{-3}{k}\right).
\end{equation}
Hence we have that 
$$\mathrm{vol}(P^3) = \frac{5\sqrt{3}}{64} L(2,-3).$$

For even $n$, we can compute the volume of the polytope $P^n$ using the Gauss-Bonnet Theorem 
which implies that 
\begin{equation}
\mathrm{vol}(P^n) = (-1)^{\frac{n}{2}}\chi(\Rho^n)\cdot{\displaystyle\frac{(2\pi)^{\frac{n}{2}}}{(n-1)!!}}
\end{equation}
where $\chi(\Rho^n)$ is the Euler characteristic of the Coxeter group $\Rho^n$ defined by $P^n$.  
By Proposition 3 of Chiswell \cite{Chiswell}, we have that 
\begin{equation}
\chi(\Rho^n) = \sum_\Delta \frac{(-1)^{|\Delta|}}{|C_\Delta|}
\end{equation}
where the sum is over all subgraphs $\Delta$ of the Coxeter graph of $\Rho^n$ 
such that the Coxeter group $C_\Delta$ defined by $\Delta$ is finite. 
Here $|\Delta|$ is the number of vertices of $\Delta$. 
From the Coxeter graph of $\Rho^n$ given in Mcleod \cite{M}, 
we computed the volumes of $P^n$ for all even $n \leq 12$ 
using Chiswell's Euler characteristic formula and the Gauss-Bonnet Theorem. 
The volumes agree with the volumes given in Table 1. 

For odd $n$, we computed the volume of $P^n$ by numerical integration.  
We found very close agreement with the numerical values in Table 1 in dimensions 3, 5, 7, 9. 
We found close agreement in dimensions 11 and 13, but the accuracy deteriorated 
to a 1\% discrepancy in dimension 11 and a 3\% discrepancy in dimension 13, 
which is not bad considering we numerically integrated an $n$-fold integral for $n = 11, 13$. 
After the agreement of all these alternate calculations of the volume of $P^n$ for all $n \leq 13$, 
we are confident with the correctness of our 
volume formulas derived using Siegel's analytic theory of quadratic forms.

\section{A commensurability ratio} 

M. Belolipetsky and V. Emery \cite{B-E} proved that for each odd dimension $n \geq 5$ there is a unique 
orientable, noncompact, arithmetic, hyperbolic $n$-orbifold $H^n/\Delta_n$ of smallest volume.  
If $n \equiv 3\ \mathrm{mod}\ 4$, they proved that the volume of $\Delta_n$ is given by
\begin{equation}
\mathrm{vol}(H^n/\Delta_n) = \frac{3^{\frac{n}{2}}}{2^{\frac{n-1}{2}}} L({\textstyle\frac{n+1}{2}},-3) \prod_{k=1}^{\frac{n-1}{2}}
\frac{(2k-1)!}{(2\pi)^{2k}}\zeta(2k).
\end{equation}
In terms of Bernoulli numbers, the above formula can be rewritten as 
\begin{equation}
\mathrm{vol}(H^n/\Delta_n) = \frac{3^{\frac{n}{2}}B}{2^{n-1}} L({\textstyle\frac{n+1}{2}},-3).
\end{equation}

In  \cite{E}, V. Emery mentioned that $\Delta_n$ is commensurable to $\Gamma_3^n$ when $n \equiv 3\ \mathrm{mod}\ 4$. 
We now compute the ratio $\mathrm{vol}(H^n/\Gamma^n_3)/\mathrm{vol}(H^n/\Delta_n)$ for $n \equiv 3\ \mathrm{mod}\ 4$. 
By Theorem 4, we have that 
\begin{equation}
\mathrm{vol}(H^n/\Gamma^n_3) = 
\frac{3^{\frac{n}{2}}B}{2^{n+1}}\big(2^{\frac{n-1}{2}}+(-1)^{\frac{n+1}{4}}\big)
\big(2^{\frac{n+1}{2}}+1\big)L({\textstyle\frac{n+1}{2}},-3).
\end{equation}
Therefore, we have
\begin{equation}
\frac{\mathrm{vol}(H^n/\Gamma^n_3)}{\mathrm{vol}(H^n/\Delta_3)} = 
\frac{1}{4}\big(2^{\frac{n-1}{2}}+(-1)^{\frac{n+1}{4}}\big)\big(2^{\frac{n+1}{2}}+1\big).
\end{equation}

Although $\Delta_n$ is considered in \cite{B-E} only for $n \geq 5$,  
we can define $\Delta_3$ to be $\mathrm{PGL}(2,\mathcal{O}_3)$ 
where $\mathcal{O}_3$ is the ring of integers of $\mathbb{Q}(\sqrt{-3})$. 
Then $\Delta_3$ is arithmetic 
and Meyerhoff \cite{M85} proved that $H^3/\Delta_3$ has minimum 
volume among all orientable, noncompact, hyperbolic 3-orbifolds.  
The group $\Delta_3$ is the orientation preserving subgroup of a hyperbolic Coxeter group of type $[3,3,6]$. 
The above formula for $\mathrm{Vol}(H^3/\Delta_3)$ gives the correct volume for $H^3/\Delta_3$. 
Hence the above formula for $\mathrm{vol}(H^3/\Gamma^3_3)/\mathrm{vol}(H^3/\Delta_3)$ gives 
the correct value $5/4$. 

The groups $\Gamma^3_3$ and $\Delta_3$ are commensurable. 
The relationship between $\Gamma^3_3$ and $\Delta_3$ is explained by the commensurability 
diagram on page 130 of Johnson et al.\ \cite{J-T2}. 
The group $\Gamma^3_3$ is a hyperbolic Coxeter group of type $[4,3,6]$. 
The Coxeter group $[4,3,6]$ has a Coxeter subgroup of type $[6,3^{1,1}]$ of index 2.
Hence the orientation preserving subgroup of $[6,3^{1,1}]$ has index 4 in $[4,3,6]$. 
Now the Coxeter tetrahedron of type $[6,3^{1,1}]$ can be subdivided into 5 copies of 
the Coxeter tetrahedron of type $[3,3,6]$. See Figure 1. 
Hence $[6,3^{1,1}]$ is conjugate to a subgroup of $[3,3,6]$ of index 5. 
Therefore, the orientation preserving subgroup of $[6,3^{1,1}]$ is conjugate 
to a subgroup of $\Delta_3$ of index 5. 
Thus, the ratio $5/4$ faithfully represents the commensurability relationship between $\Gamma^3_3$ and $\Delta_3$, 
that is, $\Gamma^3_3$ has a subgroup of index 4 that is conjugate to a subgroup of $\Delta_3$ of index 5. 

\begin{figure}
\begin{center}
\includegraphics[viewport=0.75in 0.75in 3.25in 3.25in]{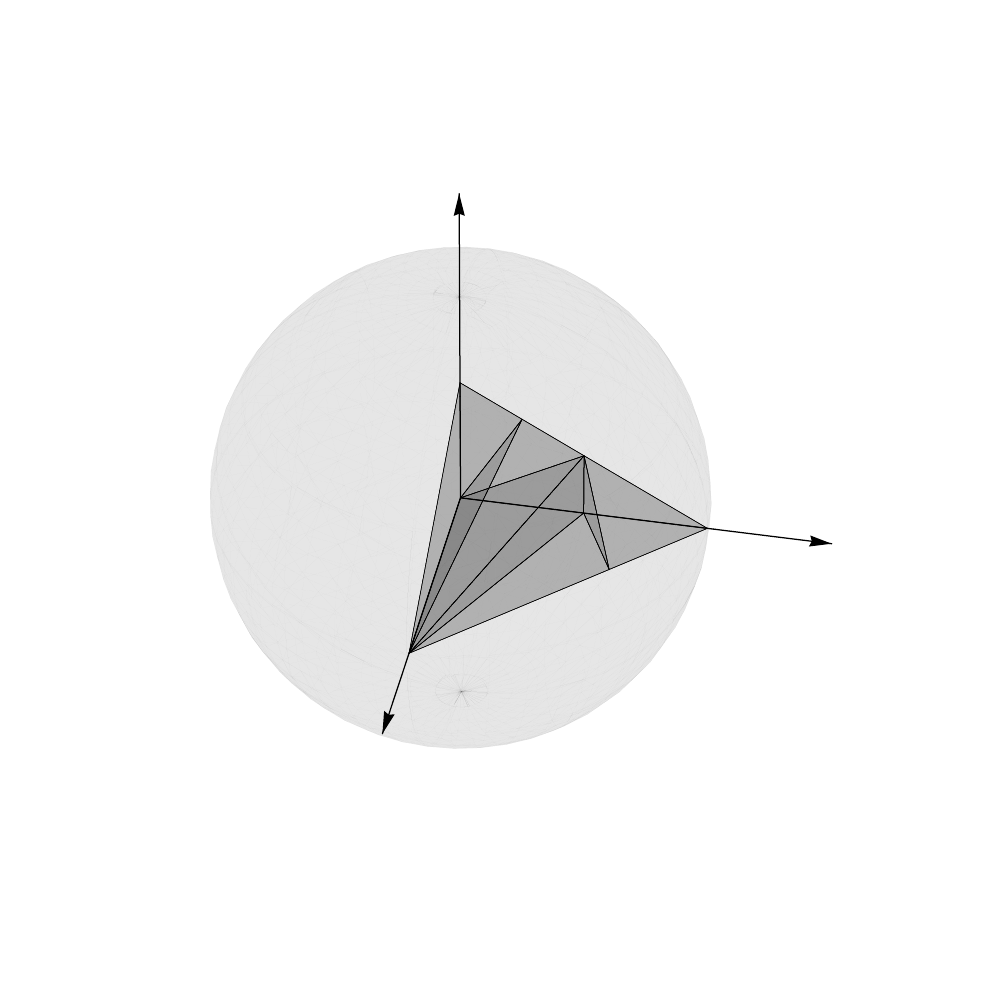}
\end{center}
\caption{The subdivision of the Coxeter tetrahedron of type $[6,3^{1,1}]$ into 5 copies of the Coxeter tetrahedron of type $[3,3,6]$ 
shown in the projective ball model of hyperbolic 3-space with the coordinate axes shown.}  
\end{figure}

We now turn our attention to dimension 7. 
Our volume ratio formula gives that 
$$\mathrm{vol}(H^7/\Gamma^7_3)/\mathrm{vol}(H^7/\Delta_7) = 153/4.$$
By the main theorem of Hild \cite{H}, 
the group $\Delta_7$ is a subgroup of index two of a discrete group $\hat\Delta_7$ of isometries of $H^7$ 
such that $\Delta_7$ is the orientation preserving subgroup of $\hat\Delta_7$. 
The group $\hat\Delta_7$ is generated by a Coxeter subgroup $\bar \Delta_7$ of index 2 of type $[3^{3,2,2}]$ and an involution 
$\sigma$ that acts as the mirror symmetry of the Coxeter system defining $\bar \Delta_7$. 
The involution $\sigma$ is orientation preserving, since $\sigma$ transposes two pairs of Coxeter generators of $\bar\Delta_7$. 
Hence $\Delta_7$ is generated by the orientation preserving subgroup of $\bar\Delta_7$ and $\sigma$. 

We explicitly defined the group $\bar\Delta_7$ to be the group generated by the reflections in the sides 
of the hyperbolic Coxeter 7-simplex $\Delta$ whose sides have Lorentz normal unit vectors listed in the columns 
of the matrix
$$\left(\begin{array}{rrrrrrrr}
0                   &                  0 &  0                 &                  0 &  0                &  -\frac{1}{2} & 0                  & 1 \vspace{.035in}\\
0                   & -\frac{1}{2} & 0                  &                  0 &  0                &   \frac{1}{2}   & 0                  & \frac{1}{2} \vspace{.035in}  \\
0                   &  \frac{1}{2} & 0                  & -\frac{1}{2} &  0                 &  \frac{1}{2}    & 0                 & \frac{1}{2} \vspace{.035in}   \\
0                   &                  0 & 0                  & \frac{1}{2} &  0                 &  0                    & -1                & \frac{1}{2} \vspace{.035in} \\
0                   &                  0 & 0                  &  \frac{1}{2} & -1                 &  \frac{1}{2} &  0                & 0  \vspace{.035in}\\
0                   &  \frac{1}{2} & -1                 &  \frac{1}{2}  & 0                 &  0                 & 0                 &  0 \vspace{.035in}\\
-1                  &  \frac{1}{2} &  0                 &                   0 &                0  &  0                 & 0                &  0  \\
0                   &                  0 &  0                 &                  0  &                0  &  0                 & 0               &  \frac{\sqrt{3}}{2}
\end{array}\right).$$
The involution $\sigma$ acts as a symmetry of $\Delta$. 
The Lorentzian matrix of $\sigma$ is 
$$\left(\begin{array}{rrrrrrrr}
-2                  &                  0 &  0                 &                  0 &  0                &  0                   & 0                  & \sqrt{3}  \\
0                   &                  1 & 0                  &                  0 &  0                &  0                   & 0                  & 0  \\
0                   &                  0 & 1                  &                  0 &  0                 & 0                   & 0                 & 0   \\
0                   &                  0 & 0                  &                  0 &  1                 &  0                  & 0                & 0  \\
0                   &                  0 & 0                  &                  1 &  0                 &  0                 &  0                & 0  \\
0                   &                  0&  0                  &                  0 & 0                 &  1                  & 0                 &  0 \\
0                   &                  0 & 0                  &                  0 & 0                 &  0                 & 1                 &  0  \\
-\sqrt{3}       &                  0 & 0                  &                  0 & 0                  &  0                 & 0               &  2
\end{array}\right).$$

In 1996, we discovered that $\Gamma_3^7$ is a Coxeter reflection group defined by the Coxeter graph 
in Figure 1 of \cite{M}, for $n = 7$, using Vinberg's algorithm \cite{V}. 
We explicitly defined the group $\Gamma_3^7$ to be the group generated by the reflections in the sides 
of the hyperbolic Coxeter 7-dimensional polytope $P^7$ whose sides have Lorentz normal unit vectors listed in the columns 
of the matrix
$$\left(\begin{array}{rrrrrrrrr}
-\frac{1}{\sqrt{2}}  &                  0            &                0             &                0             &  0                            & 0                            & 0  & \sqrt{\frac{3}{2}} & \frac{1}{\sqrt{2}} \vspace{.03in} \\
 \frac{1}{\sqrt{2}}  & -\frac{1}{\sqrt{2}}  &                0             &                0             &  0                            & 0                            & 0  & 0                           & \frac{1}{\sqrt{2}} \vspace{.035in} \\
0                             &  \frac{1}{\sqrt{2}}  & -\frac{1}{\sqrt{2}} &                0             &  0                            &  0                           & 0  & 0                           & \frac{1}{\sqrt{2}}\vspace{.035in} \\
0                             &                  0            & \frac{1}{\sqrt{2}}  & -\frac{1}{\sqrt{2}} &  0                            &  0                           & 0 & 0                            & \frac{1}{\sqrt{2}} \vspace{.035in} \\
0                             &                  0            &                0            &  \frac{1}{\sqrt{2}}  & -\frac{1}{\sqrt{2}}  &  0                           & 0  & 0                           & \frac{1}{\sqrt{2}} \vspace{.035in} \\
0                             &                  0            &                0            &                 0             & \frac{1}{\sqrt{2}}   & -\frac{1}{\sqrt{2}} & 0  &  0                          & 0 \vspace{.03in} \\
0                             &                  0            &                0            &                 0             &   0                           & \frac{1}{\sqrt{2}}  & -1 & 0                           & 0 \\
0                             &                  0            &                0            &                 0             &   0                           &  0                           & 0 & \frac{1}{\sqrt{2}}  & \sqrt{\frac{3}{2}} 
\end{array}\right).$$
In 1996, we discovered by a coset enumeration program that $\bar\Delta_7\cap \Gamma_3^7$ has index 2295 in $\bar\Delta_7$ and index 60 in 
$\Gamma_3^7$. The involution $\sigma$ is an element of $\Gamma^7_3$ and so $\Delta_7\cap \Gamma_3^7$ has index 2295 in $\Delta_7$ and 
index 60 in $\Gamma^7_3$. 
The commensurability ratio is $2295/60 = 153/4$. 
We were disappointed that we could not find representations of $\Delta_7$ and $\Gamma_3^7$ 
such that $\Delta_7\cap\Gamma_3^7$ has index 153 in $\Delta_7$ and index 4 in $\Gamma_3^7$, 
but we suspect that the representations that we found give the smallest possible indices. 

In 1996, we found a discrete group $\Gamma$ of isometries of $H^7$ that corresponds to the group of positive units 
of the quadratic form defined by the diagonal matrix $\mathrm{diag}(1,1,1,1,1,1,3,-1)$ 
such that $\bar\Delta_7\cap\Gamma$ has index 119 in $\bar\Delta_7$ and index 4 in $\Gamma$. 
Moreover $\mathrm{vol}(H^7/\Gamma)/\mathrm{vol}(H^7/\bar\Delta_7) = 119/4$,  
and so the ratio 119/4 faithfully represents the commensurability relationship between $\bar\Delta_7$ and $\Gamma$. 
This computation was reported on page 345 of \cite{J-T}.  
The story of the computation of the volume of $H^7/\Gamma$ will have to wait for another day.

\end{document}